\DeclareFontFamily{U}{mathb}{\hyphenchar\font45}
\DeclareFontShape{U}{mathb}{m}{n}{
      <5> <6> <7> <8> <9> <10> gen * mathb
      <10.95> mathb10 <12> <14.4> <17.28> <20.74> <24.88> mathb12
      }{}
\DeclareSymbolFont{mathb}{U}{mathb}{m}{n}
\DeclareMathSymbol{\righttoleftarrow}{3}{mathb}{"FD}
\theoremstyle{plain}
\newtheorem{prop}{Proposition}[section]
\newtheorem{theo}[prop]{Theorem}
\newtheorem{coro}[prop]{Corollary}
\newtheorem{lemm}[prop]{Lemma}
\theoremstyle{remark}
\theoremstyle{definition}
\newtheorem{rema}[prop]{Remark}
\newtheorem{exam}[prop]{Example}
\numberwithin{equation}{section}
\def\cT{{\mathcal T}}
\def\sA{{\mathsf A}}
\def\sD{{\mathsf D}}
\def\fA{{\mathfrak A}}
\def\fD{{\mathfrak D}}
\def\fF{{\mathfrak F}}
\def\fS{{\mathfrak S}}
\def\fS{{\mathfrak S}}
\def\bA{{\mathbb A}}
\def\bG{{\mathbb G}}
\def\bP{{\mathbb P}}
\def\bZ{{\mathbb Z}}
\def\rH{{\mathrm H}}
\def\bF{{\mathbb F}}
\def\Pic{\mathrm{Pic}}
\def\Aut{\mathrm{Aut}}
\def\PSL{\mathsf{PSL}}
\def\GL{\mathsf{GL}}
\def\Hom{\mathrm{Hom}}
\def\lim{\mathrm{lim}}
\def\Ker{\mathrm{Ker}}
\begin{document}
\title[Equivariant unirationality]{Equivariant unirationality of Fano threefolds}

\author[I. Cheltsov]{Ivan Cheltsov}
\address{Department of Mathematics, University of Edinburgh, UK}
\email{I.Cheltsov@ed.ac.uk}

\author[Yu. Tschinkel]{Yuri Tschinkel}
\address{
  Courant Institute,
  251 Mercer Street,
  New York, NY 10012, USA
}
\email{tschinkel@cims.nyu.edu}

\address{Simons Foundation\\
160 Fifth Avenue\\
New York, NY 10010\\
USA}

\author[Zh. Zhang]{Zhijia Zhang}

\address{
Courant Institute,
  251 Mercer Street,
  New York, NY 10012, USA
}

\email{zhijia.zhang@cims.nyu.edu}

\date{\today}

\begin{abstract}
We study unirationality of actions of finite groups on Fano threefolds.
\end{abstract}

\maketitle

\section{Introduction}
\label{sec.intro}

In this paper, we consider generically free regular actions of finite groups $G$ on smooth projective varieties $X$ over an algebraically closed field  $k$ of characteristic zero. Simplest such actions arise from linear representations, i.e., as projectivizations $\bP(V)$ of linear representations $V$ of $G$. 
We are interested in birational properties of $G$-actions, e.g., 
\begin{itemize}
\item {\bf (L)} {\em linearizability}: equivariant birationality to a linear action, 
\item {\bf (SL)} {\em stable linearizability}: linearizability of $X\times\bP^m$, with trivial action on the second factor. 
\end{itemize}
These notions should be viewed as equivariant analogs of rationality and stable rationality in geometry over nonclosed fields: there are many similarities but also striking differences between these theories, 
see, e.g., \cite{BnG}, \cite{KT-vector} for an introduction to this circle of problems.  

Here, we explore an equivariant version of unirationality: 
\begin{itemize}
    \item {\bf (U)} {\em $G$-unirationality}:
    existence of a dominant equivariant rational map
$$
\bP(V)\dashrightarrow X,
$$
where $V$ is a $G$-representation. 
\end{itemize}
Clearly, 
$$
\mathbf{(L)} \Rightarrow \mathbf{(SL)} \Rightarrow \mathbf{(U)}. 
$$
In the definition of {\bf (U)}, one can replace $\bP(V)$ by $V$, a $G$-representation, i.e.,  {\bf (U)} is equivalent to {\em very versality} of the $G$-action, in the terminology of, e.g., \cite{DR}. 

A necessary condition for $G$-unirationality is existence of fixed points upon restriction to abelian subgroups:

\begin{itemize}
    \item {\bf Condition (A)}:
    for every abelian subgroup $H\subseteq G$ one has 
    $$
    X^H\neq \emptyset.
    $$
\end{itemize}

This should be viewed as analogous to the existence of rational points, in the framework of birational geometry over nonclosed fields. Note that there do exist linearizable $G$-actions without $G$-fixed points, moreover, the existence of $G$-fixed points is not an equivariantly birational property, for nonabelian $G$. Thus, some standard unirationality constructions in geometry over nonclosed fields fail to apply in the equivariant setting.   

Duncan proved that Condition {\bf (A)} is also sufficient for del Pezzo surfaces
of degree $\ge 3$, with generically free actions~\cite[Theorem 1.4]{Duncan}. The cases of del Pezzo surfaces of degree 2 and 1 remain open. 

We turn to smooth Fano threefolds. These are classified by 
\begin{itemize}
    \item $r\in\{1,\ldots,10\}$ -- rank of the Picard group,
    \item $i\in\{1,2,3,4\}$ -- Fano index, 
    \item $d= (-K_X)^3/i^3$ -- degree. 
\end{itemize}
Their rationality over algebraically closed fields is {\em almost} settled. There are still outstanding open problems concerning stable rationality and unirationality, e.g., stable rationality of cubic threefolds and unirationality of general quartic threefolds. Rationality properties of geometrically rational Fano threefolds have received a lot of attention, see \cite{HT-quad}, \cite{HT-18}, \cite{KP-1}, \cite{KP-2}. There is also a wealth of results concerning (stable) linearizability of group actions on (possibly singular) Fano threefolds, see, e.g., \cite{lemire}, 
\cite{CTZ-burk}, \cite{CTZ-cubic}, \cite{CMTZ}, \cite{HT-torsor}, \cite{BBT}, \cite{TT}. 

In this paper, we focus on equivariant unirationality of Fano threefolds of index $\ge 2$; these should be viewed as 3-dimensional analogs of del Pezzo surfaces. As in the work of Duncan on del Pezzo surfaces \cite{Duncan}, Fano threefolds of {\em small} degree are currently out of reach -- we do not even know the unirationality of any smooth Fano threefold of index $2$ and degree $1$ (all other Fano threefolds of index $\ge 2$ are known to be unirational, over algebraically closed fields). Our principal results are: 
\begin{itemize}
    \item A generically free $G$-action on a smooth quadric threefold is stably linearizable if and only if it satisfies Condition {\bf (A)}, Theorem~\ref{thm:quad-uni}. 
    \item A generically free $G$-action on a smooth cubic threefold satisfying Condition {\bf (A)} is $G$-unirational, with the possible exception of $G=C_9\rtimes C_3$, acting on the Fermat cubic threefold, 
  and $C_5\rtimes C_{11}, \PSL_2(\bF_{11})$, acting on the Klein cubic threefold, and a 1-parameter family of cubics with an $\fA_5$-action,    
    Theorem~\ref{thm:main-cubic}. 
    \item A generically free action on a cubic threefold with isolated singularities,  
    satisfying Condition {\bf (A)}, is $G$-unirational, Theorem~\ref{thm:singcub-main}.
    This class contains cubic threefolds with cohomological obstructions to stable linearizability, see \cite{CTZ-cubic}, \cite{CMTZ}. 
     \item A generically free $G$-action on a smooth  complete intersection of two quadrics $X\subset \bP^5$ is $G$-unirational if and only if it satisfies Condition {\bf (A)}, which turns out to be  equivalent to $X^G\neq \emptyset$, Theorem~\ref{thm:2-2}. 
     \item A generically free $G$-action on the (unique) smooth del Pezzo threefold $V_5$ of degree $5$ is linearizable except for $G=\fA_5$, which is not linearizable, by \cite{CS}, but stably linearizable, by \cite[Example 7]{BBT-der}, see Remark~\ref{exam:V5}.
\end{itemize}

\

Here is the roadmap of the paper: in Section~\ref{sect:gen} we recall notions of $G$-birational geometry, with a focus on $G$-unirationality. In Section~\ref{sect:cubic} we present several general unirationality constructions, applicable in the equivariant context. In Section~\ref{sect:quad} we specialize to quadric threefolds. Then we proceed to treat
smooth cubics in Section~\ref{sect:smooth-3}, singular cubics in Section~\ref{sect:sing}, and 
intersections of two quadrics in Section~\ref{sect:2quad}.

\

\noindent
{\bf Acknowledgments:} 
We are grateful to Andrew Kresch and Brendan Hassett for their interest and suggestions. 
The first author was partially supported by the Leverhulme Trust grant RPG-2021-229 and by EPSRC grant EP/Y033485/1.
He also thanks CIRM, Luminy, for its hospitality and for
providing a perfect work environment. The second author was partially supported by NSF grant 2301983.

\section{Generalities}
\label{sect:gen}

Throughout, we work over an algebraically closed field $k$ of characteristic zero; all varieties are assumed to be irreducible over $k$. A $G$-variety is a projective variety with a generically free regular $G$-action; we will also consider situations when the $G$-action is {\em not} generically free. 
The $G$-fixed point locus of a $G$-variety $X$
will be denoted by $X^G$. For smooth projective $G$-varieties $X$ and {\em abelian} subgroups $H\subseteq G$, existence of $H$-fixed points is a birational property; this is not the case for nonabelian groups! Note that cyclic actions on rationally connected $G$-varieties always have fixed points, but this need not be the case for actions of abelian groups, e.g., the $C_2^2$-action on $\bP^1$. 

If $\tilde{X}\dashrightarrow X$ is a dominant morphism  of $G$-varieties and $\tilde{X}$ has $G$-fixed points, then so does $X$. Since {\em linear} actions of abelian groups always have fixed points, 
a necessary condition for $G$-unirationality of an action of a finite group $G$ on a smooth projective variety $X$ is Condition {\bf (A)}: existence of fixed points upon restriction to every abelian subgroup. 

Linearizability and stable linearizability of generically free actions of finite groups on rational surfaces and rational Fano threefolds have been considered in, e.g.,  \cite{HT-torsor}, \cite{BBT-der},  \cite{CTZ-burk}, \cite{CMTZ}, \cite{HT-quadric}. The related notion of $G$-unirationality, or, equivalently very versality, plays an important role in the study of {\em essential dimension}, see, e.g., \cite{DR}, \cite{Merkuriev}. However, only few concrete examples of $G$-unirational varieties failing (stable) linearizability are known, see \cite{Duncan} for a systematic treatment of del Pezzo surfaces of degree $\ge 3$.

The bridge between equivariant geometry and geometry over nonclosed fields is provided via {\em torsors}: For a field extension  
$K/k$ and a $G$-torsor $T$ over $K$, let 
${}^TX$ be the $T$-twist of $X$ over $K$. A key result is:

\begin{theo} \cite[Theorem 1.1]{DR}
\label{thm:bridge}
Let $X$ be a $G$-variety over $k$. Then
\begin{itemize}
    \item the $G$-action on $X$ is stably linearizable if and only if for every field extension $K/k$ and every $G$-torsor $T$ over $K$, the twist ${}^TX$ is stably rational over $K$, 
    \item  the $G$-action on $X$ is unirational if and only if every twist ${}^TX$ as above is unirational over $K$. 
\end{itemize}
\end{theo}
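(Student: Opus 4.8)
\emph{Strategy.} The plan is to recover this statement (which is \cite[Theorem 1.1]{DR}) from the machinery of \emph{versal torsors}, \emph{twisting}, and the \emph{no-name lemma}; I record the skeleton because these same ingredients are used repeatedly below. Fix a generically free $G$-representation $W$ (e.g.\ the regular representation), put $B=W/G$, let $B^{\circ}\subseteq B$ be the dense open over which $G$ acts freely, and let $T_{\mathrm{vers}}$ be the $G$-torsor $W|_{B^{\circ}}\to B^{\circ}$, so $K_{\mathrm{vers}}:=k(B)=k(W)^{G}$ is its base. The three facts I would take as black boxes are: (i) \emph{functoriality of twisting} -- a $G$-equivariant dominant (resp.\ birational) map $Y\dashrightarrow X$ induces, for every $K/k$ and every $G$-torsor $T$ over $K$, a dominant (resp.\ birational) map ${}^{T}Y\dashrightarrow{}^{T}X$ over $K$; (ii) the \emph{no-name lemma} -- if $G$ acts generically freely on $Y$ and $V$ is a $G$-representation, then $Y\times\bA(V)\sim_{G}Y\times\bA^{\dim V}$ with the trivial action on the second factor; (iii) \emph{versality} of $W\to B$ -- for every infinite field $K\supseteq k$, every nonempty open $B'\subseteq B^{\circ}$ and every $G$-torsor $T/K$ there is a point $b\in B'(K)$ with $T\cong(W\to B)|_{b}$; together with generalized Hilbert 90, $H^{1}(K,\GL(W))=1$.

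\emph{The implications $\Rightarrow$.} Suppose the $G$-action on $X$ is stably linearizable, say $X\times\bP^{m}\sim_{G}\bP(V)$. For any $K/k$ and $G$-torsor $T/K$, push $T$ forward along $G\hookrightarrow\GL(V)$; the resulting $\GL(V)$-torsor is trivial by Hilbert 90, hence ${}^{T}\bA(V)\cong\bA^{\dim V}_{K}$ and ${}^{T}\bP(V)\cong\bP^{\dim V-1}_{K}$. By (i), ${}^{T}X\times\bP^{m}_{K}\cong{}^{T}(X\times\bP^{m})$ is $K$-birational to $\bP^{\dim V-1}_{K}$, so ${}^{T}X$ is stably rational over $K$. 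The unirational half is the same argument with $\bP(V)$ only required to dominate $X$ (rather than be $G$-birational to $X\times\bP^{m}$), giving that every ${}^{T}X$ is dominated by a projective space over $K$.

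\emph{The implications $\Leftarrow$.} By (iii) it suffices to test the hypothesis on $T_{\mathrm{vers}}$. So assume ${}^{T_{\mathrm{vers}}}X$ is stably rational over $K_{\mathrm{vers}}$; since ${}^{T_{\mathrm{vers}}}X$ is the generic fibre of $(W\times X)/G\to B$, spreading out the birational map to projective space over $k(B)$ produces a nonempty open $B''\subseteq B^{\circ}$ and a $B''$-birational map $(W\times X)/G\times\bP^{N}\dashrightarrow\bP^{N'}\times B''$. Base-changing along $W|_{B''}\to B''$, and using $W|_{B''}\times_{B''}(W\times X)/G\cong W|_{B''}\times X$ (diagonal $G$-action), yields a $G$-birational map
$$
W\times X\times\bP^{N}\dashrightarrow W\times\bP^{N'}
$$
with trivial $G$-action on each projective factor. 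The right-hand side is $G$-birational to $\bA(W\oplus k^{N'})$, the space of a generically free $G$-representation. On the left-hand side, $W\times X\times\bP^{N}\sim_{G}X\times\bA(W\oplus k^{N})$, and (ii) with $Y=X$, $V=W\oplus k^{N}$ rewrites this as $X\times\bA^{\dim W+N}$ with trivial action on the affine factor. Chaining the three identifications, $X\times\bA^{\dim W+N}$ is $G$-birational to a $G$-representation, i.e.\ $X$ is stably linearizable. For the unirational half one does the same but need only spread out and base-change a dominant map, obtaining a dominant $G$-map $\bA^{N}\times W\dashrightarrow W\times X$ which, composed with the ($G$-equivariant) projection $W\times X\to X$, exhibits $X$ as $G$-unirational; here the no-name lemma is not needed.

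\emph{Main obstacle.} The delicate point is (iii) and the spreading-out it controls: one must know $W\to B$ is genuinely versal -- that every $G$-torsor over every $K/k$ comes from a $K$-point of \emph{every} dense open of $B^{\circ}$, so that shrinking $B$ when spreading out the rational map costs nothing -- and one must keep the $\bA$-versus-$\bP$ bookkeeping consistent, tracking which projective (or affine) factors carry the trivial action as one passes between affine cones and their projectivizations. Once versality and functoriality of twisting are in hand, the no-name lemma supplies the remaining step, and the unirational statement falls out of the same scheme with dominance everywhere in place of birationality.
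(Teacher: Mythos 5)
The paper does not prove this statement at all --- it is quoted directly from Duncan--Reichstein \cite[Theorem 1.1]{DR} --- and your sketch is essentially the argument given in that source: twist a (stable) linearization $X\times\bP^m\sim_G\bP(V)$ by a torsor and trivialize via Hilbert 90 for one direction, and for the converse realize ${}^{T_{\mathrm{vers}}}X$ as the generic fibre of $(W\times X)/G\to W/G$, spread out, pull back along the torsor $W\to W/G$ to trivialize the associated bundle, and finish with the no-name lemma (which applies because the paper's standing convention makes the $G$-action on $X$ generically free); this outline is correct. The only remark is that your appeal to versality (your ingredient (iii)) in the converse direction is superfluous: the hypothesis already quantifies over all field extensions and all torsors, and you apply it only to the single torsor $T_{\mathrm{vers}}$ over $k(W)^G$, so the ``main obstacle'' you flag does not actually arise --- versality would only be needed if one wanted to deduce the statement for all twists from the versal one.
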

We will apply this in subsequent sections to quadric and cubic hypersurfaces. 

\

A related notion that plays a role in equivariant geometry is 
the {\em Amitsur} group $\mathrm{Am}(X,G)$, defined in \cite[Section 6]{BC-finite} as the image of $G$-invariant divisor classes 
$$
\Pic(X)^G\stackrel{\delta_2}{\longrightarrow} \rH^2(G,k^\times), 
$$
where $\delta_2$ is the functorial homomorphism arising from the Leray spectral sequence, see, e.g., \cite[Section 3]{KT-dp}.
By \cite[Theorem 6.1]{BC-finite}, this is a $G$-birational invariant, which vanishes for linear actions and as soon  as $X^G\neq \emptyset$.
The vanishing of $\mathrm{Am}(X,G)$ implies that every $G$-invariant divisor class in $\Pic(X)$ is represented by a $G$-linearized line bundle on $X$. 

All possibilities for $\mathrm{Am}(X,G)$ for $G$-surfaces $X$ have been determined in \cite[Proposition 6.7]{BC-finite}. In particular, this group is trivial for del Pezzo surfaces of degree $\le 6$.

\begin{lemm}
\label{lem.Amitsurobstructs}
Let $X$ be a $G$-variety with nontrivial Amitsur invariant. Then 
$X$ is not $G$-unirational. 
\end{lemm}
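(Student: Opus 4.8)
The plan is to prove the contrapositive: assuming $X$ is $G$-unirational, I will show that $\mathrm{Am}(X,G)$ is trivial. So let $f\colon\bP(V)\dashrightarrow X$ be a dominant $G$-equivariant rational map, with $V$ a $G$-representation. First I would replace $f$ by a morphism: applying equivariant resolution of singularities (valid in characteristic zero) to the closure of the graph of $f$ inside $\bP(V)\times X$ produces a smooth projective $G$-variety $Y$ together with a birational $G$-morphism $p\colon Y\to\bP(V)$ and a dominant $G$-morphism $q\colon Y\to X$. Note that the kernel of $G\to\Aut(\bP(V))$ acts trivially on the dense image of $f$, hence trivially on $X$, hence is trivial since $X$ is a $G$-variety; as a faithful action of a finite group on an irreducible variety is automatically generically free, $Y$ is again a $G$-variety, so the $G$-birational invariance of the Amitsur group applies to it.

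The second step is the computation $\mathrm{Am}(\bP(V),G)=0$, which is the vanishing of the Amitsur group for linear actions recalled above; concretely, the generator of $\Pic(\bP(V))\cong\bZ$ is represented by a $G$-linearized line bundle (e.g.\ the tautological bundle, on which $G$ acts through its linear action on $V$). Since $p$ is a $G$-birational morphism of smooth projective $G$-varieties and $\mathrm{Am}(-,G)$ is a $G$-birational invariant, $\mathrm{Am}(Y,G)=0$, i.e.\ every class in $\Pic(Y)^G$ is represented by a $G$-linearized line bundle.

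It remains to propagate this vanishing to $X$ through the dominant $G$-morphism $q$, and this is the only point demanding care. The map $\delta_2\colon\Pic(-)^G\to\rH^2(G,k^\times)$ is natural for $G$-morphisms: for $L\in\Pic(X)^G$, a choice of isomorphisms $\phi_\sigma\colon\sigma^*L\to L$ ($\sigma\in G$) yields a $2$-cocycle $c$ valued in $\rH^0(X,\cO_X^\times)=k^\times$ with $[c]=\delta_2^X(L)$; pulling the $\phi_\sigma$ back along $q$ (using $q\sigma=\sigma q$) gives isomorphisms $\sigma^*(q^*L)\to q^*L$ whose comparison cocycle is the same constant cocycle $c$, now read in $\rH^0(Y,\cO_Y^\times)=k^\times$ --- this is where one uses that $X$ and $Y$ are projective, so that global units are scalars. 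Hence $\delta_2^Y(q^*L)=\delta_2^X(L)$, and since $q^*L\in\Pic(Y)^G$ and $\mathrm{Am}(Y,G)=0$ the left side vanishes, forcing $\delta_2^X(L)=0$. As $L$ was arbitrary, $\mathrm{Am}(X,G)=0$, contrary to hypothesis. The only genuinely delicate step is this last naturality together with the bookkeeping that keeps the comparison cocycle $k^\times$-valued; the rest is formal once one invokes the $G$-birational invariance and the vanishing on linear actions.
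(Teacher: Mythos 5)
Your argument is correct and follows the paper's own route: resolve the parametrization to an equivariant morphism $q\colon Y\to X$ from a smooth projective model $G$-birational to $\bP(V)$, use vanishing of the Amitsur group for linear actions together with its $G$-birational invariance, and conclude via functoriality of $\delta_2$ under equivariant morphisms. Your explicit cocycle verification of that functoriality (using $\rH^0(\cdot,\cO^\times)=k^\times$ on projective models) just spells out what the paper cites as "functoriality of $\delta_2$," so there is nothing substantively different to report.
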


\begin{proof}
Replacing $\bP(V)$ by a $G$-variety in the same equivariant birational class, we are reduced to the assertion, that an equivariant morphism $Y\to X$ of smooth projective $G$-varieties
gives rise to an inclusion
\[ \mathrm{Am}(X,G)\subseteq \mathrm{Am}(Y,G). \]
This is clear from the functoriality of the map $\delta_2$ in the basic exact sequence
$$
0\to \Hom(G,k^\times)\to \Pic(X,G)\to \Pic(X)^G\stackrel{\delta_2}{\longrightarrow} \rH^2(G,k^\times),
$$
where $\Pic(X,G)$ is the group of isomorphism classes of $G$-linearized line bundles on $X$,
see, e.g., \cite[Section 3]{KT-Brauer}.
\end{proof}

\begin{exam}
\label{exam:strange}
Let $X=\bP^1$ with the action of the Klein four group $\bar{G}=C_2^2$; it is fixed point free. The Amitsur invariant $\mathrm{Am}(X,\bar{G})=\bZ/2$ is nontrivial, and the $\bar{G}$-action is not stably linearizable. However, $X$ admits a regular action of the dihedral group $G=\mathfrak D_4$ (of order 8), with generic stabilizer $C_2$. 
As a variety with $G$-action, $X$ is clearly $G$-unirational: we have a dominant $G$-equivariant map $V\dashrightarrow X$, where $V$ is an irreducible 2-dimensional representation of $G$. Of course,  every abelian subgroup of $G$ has fixed points on $X$. 
\end{exam}

Recall the definition of {\em Bogomolov multiplier}:
$$
\mathrm{B}_0(G):=\Ker\left(\rH^2(G,k^\times)\to \oplus_{A}\,  \rH^2(A,k^\times)\right), 
$$
where $A$ runs over all abelian subgroups of $G$. In fact, it suffices to consider {\em bicyclic} subgroups \cite{Bog-linear}, and to treat Sylow subgroups, one prime at a time. There is an extensive literature on this invariant, see, e.g., \cite[Section 3]{KT-Brauer}. 
In Example~\ref{exam:strange}, we have $\mathrm{B}_0(G)=0$. More generally, Bogomolov multiplier vanishes when 
\begin{itemize}
\item $G$ is a $p$-group of order $|G|\le p^4$,
    \item $G$ is an extension of a cyclic group by an abelian group \cite[Lemma 4.9]{Bog-linear}, \cite[Lemma 3.1]{KT-Brauer}, 
    \item $G$ is an split extension of a bicyclic group by an abelian group 
    \cite[Lemma 3.2]{KT-Brauer}.  
\end{itemize}

For a smooth projective $G$-variety $X$,  
Condition {\bf (A)} implies that 
$$
\mathrm{Am}(X,G)\subseteq \mathrm{B}_0(G). 
$$

\begin{lemm}
\label{lemm:pn}
Let $G$ be a finite group with $\mathrm{B}_0(G)=0$. Assume that $G$ acts regularly on $\bP^n$. This $G$-action is unirational if and only if it satisfies Condition {\bf (A)}.  
\end{lemm}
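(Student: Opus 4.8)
The plan is to use Theorem~\ref{thm:bridge}: a regular $G$-action on $\bP^n$ is $G$-unirational if and only if every twist ${}^T\bP^n$ over every field extension $K/k$ by every $G$-torsor $T$ is unirational over $K$. So first I would recall that the twists of $\bP^n$ over a field $K$ are precisely the Severi--Brauer varieties over $K$, and that ${}^T\bP^n$ is the Severi--Brauer variety attached to the class of a certain central simple algebra; concretely, the relevant class in $\Br(K)$ is the image of the torsor class under the map induced by the character $G \to \PGL_{n+1}$ defining the action, landing in $\rH^2(K, \mathbb{G}_m)$. The key classical fact to invoke is that a Severi--Brauer variety over $K$ is unirational (indeed rational) over $K$ if and only if it has a $K$-point, which happens if and only if the associated Brauer class is trivial. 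Hence the $G$-action on $\bP^n$ is $G$-unirational if and only if for every $K/k$ and every $G$-torsor $T$, the induced Brauer class vanishes.

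Next I would identify this Brauer-class condition with the Amitsur invariant. The Amitsur group $\mathrm{Am}(\bP^n,G)$ is precisely the obstruction to the action lifting from $\PGL_{n+1}$ to $\GL_{n+1}$, equivalently the image of the canonical generator of $\Pic(\bP^n)^G = \bZ$ under $\delta_2\colon \Pic(\bP^n)^G \to \rH^2(G,k^\times)$; and the Brauer class of the twist ${}^T\bP^n$ is obtained from a torsor class in $\rH^1(K,G)$ by cup product / connecting map with this same class in $\rH^2(G,k^\times)$. Thus all these Brauer obstructions vanish for all $T$ precisely when $\mathrm{Am}(\bP^n,G)$ dies in $\rH^2(A,k^\times)$ for the relevant abelian images — more carefully, one checks that the obstructions vanish for all field extensions and all torsors if and only if $\mathrm{Am}(\bP^n,G) \subseteq \mathrm{B}_0(G)$. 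This is where the hypothesis $\mathrm{B}_0(G)=0$ enters decisively: combined with the fact (stated in the excerpt) that Condition {\bf (A)} forces $\mathrm{Am}(X,G)\subseteq \mathrm{B}_0(G)$, we get that under Condition {\bf (A)} and $\mathrm{B}_0(G)=0$ the Amitsur invariant vanishes outright.

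With $\mathrm{Am}(\bP^n,G)=0$, every $G$-invariant divisor class is represented by a $G$-linearized line bundle; in particular the action lifts to a linear $\GL_{n+1}$-action, i.e.\ the $G$-action on $\bP^n$ is itself the projectivization $\bP(V)$ of a genuine linear representation $V$ of $G$. Then the identity map $\bP(V)\dashrightarrow \bP^n$ is an equivariant dominant (indeed iso) map, so the action is trivially $G$-unirational. Conversely, $G$-unirationality always implies Condition {\bf (A)}, as already noted in the excerpt (a dominant equivariant rational map $\bP(V)\dashrightarrow X$ pulls back abelian fixed points, and linear abelian actions have fixed points). This completes the equivalence.

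The main obstacle I anticipate is the careful bookkeeping in the middle step: matching the Brauer class of the twist ${}^T\bP^n$ over $K$ with the image under $\delta_2$ of the generator of $\Pic(\bP^n)^G$, uniformly in $K$ and $T$, and verifying that the collection of these classes over all $(K,T)$ detects exactly the subgroup generated by the image of $\mathrm{Am}(\bP^n,G)$ in the $\rH^2$ of abelian (bicyclic) subgroups — equivalently, that "all twists have a rational point" is equivalent to "$\mathrm{Am}(\bP^n,G)\subseteq \mathrm{B}_0(G)$". This is essentially the translation between the Amitsur invariant of a $G$-action and the Amitsur subgroup of the corresponding generic Severi--Brauer variety, and although it is surely known, one must cite or reproduce the precise statement. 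Everything else — triviality of Brauer class $\Leftrightarrow$ rational point $\Leftrightarrow$ unirationality for Severi--Brauer varieties, and the implication $G$-unirational $\Rightarrow$ {\bf (A)} — is standard and already available in the excerpt.
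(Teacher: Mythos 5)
Your proof is correct where it matters, and its decisive mechanism is exactly the paper's: Condition {\bf (A)} forces $\mathrm{Am}(\bP^n,G)\subseteq\mathrm{B}_0(G)=0$, vanishing of the Amitsur group linearizes the invariant class $\cO(1)$, so the action is the projectivization of a genuine representation and hence trivially $G$-unirational; the converse is the standard fixed-point observation. The paper's proof is literally this one line. The detour through Theorem~\ref{thm:bridge} and Severi--Brauer twists is superfluous for this lemma --- you never actually use it, since once $\mathrm{Am}(\bP^n,G)=0$ you conclude linearity directly --- and the intermediate claim you flag as the main obstacle is not correct as stated: vanishing of the Brauer obstruction for \emph{all} pairs $(K,T)$ is equivalent to $\mathrm{Am}(\bP^n,G)=0$, not to $\mathrm{Am}(\bP^n,G)\subseteq\mathrm{B}_0(G)$. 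Indeed, by Bogomolov's identification of $\mathrm{B}_0(G)$ with the unramified Brauer group of the quotient of a faithful representation, a nonzero Amitsur class lying in $\mathrm{B}_0(G)$ still pulls back to a nonzero class in $\Br\bigl(k(V)^G\bigr)$ under the generic torsor, so such classes are detected by twists even though they restrict trivially to all abelian subgroups. Under the hypothesis $\mathrm{B}_0(G)=0$ the two conditions coincide, so the slip is harmless for this lemma, but you should either drop the twisting step or restate it with the correct criterion.
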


\begin{proof}
Under our assumption, one has
$
\mathrm{Am}(\bP^n,G)\subseteq \mathrm{B}_0(G)=0.
$
\end{proof}

Here we do not assume that the $G$-action on $\bP^n$ is generically free. Note that for $G$ with $\mathrm{B}_0(G)\neq 0$, there exist regular generically free actions on projective spaces, which satisfy Condition {\bf (A)} but are {\em not} linear, and not $G$-unirational.

\begin{lemm}
\label{lemm:line}
Let 
$$
X:=\bP_B(\mathcal E) 
$$
be
the projectivization of a vector bundle $\mathcal E\to B$, where $B$ is a smooth projective variety. 
Assume that $X$ carries a generically free regular $G$-action such that the canonical projection 
$$
\pi: X\to B
$$
is $G$-equivariant. Assume that 
\begin{itemize}
\item[(1)] $\rH^1(G,\Pic(B))=0$, 
\item[(2)] $X$ satisfies Condition {\bf (A)}, and 
\item[(3)] $\mathrm{B}_0(G)=0$.
\end{itemize}
Then the $G$-action lifts to $\mathcal E$. 
\end{lemm}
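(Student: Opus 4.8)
The plan is to identify the obstruction to lifting the $G$-action with the Amitsur class of the relative hyperplane bundle, kill it using hypotheses (2) and (3), and then transport the result from $\mathcal O_X(1)$ to $\mathcal E$ by pushforward along $\pi$. The first step is to analyze $\Pic(X)$. Write $\xi\in\Pic(X)$ for the class of $\mathcal O_X(1)$, normalized so that $\pi_*\mathcal O_X(1)=\mathcal E$; then $\Pic(X)=\pi^*\Pic(B)\oplus\bZ\xi$. The $G$-action on $X$ is compatible with $\pi$ and with the induced $G$-action on $B$, hence carries fibers of $\pi$ isomorphically onto fibers and restricts to $\mathcal O(1)$ on each of them; consequently $g^*\xi=\xi+\pi^*c(g)$ for a unique $c(g)\in\Pic(B)$, and one checks directly that $g\mapsto c(g)$ is a $1$-cocycle for the $G$-module $\Pic(B)$. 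By hypothesis (1) this cocycle is a coboundary, so after replacing $\mathcal E$ by a twist $\mathcal E\otimes\mathcal M$ with $\mathcal M\in\Pic(B)$ --- which changes neither $X=\bP_B(\mathcal E)$ nor the $G$-action on it, only our choice of $\mathcal O_X(1)$ --- we may assume $g^*\xi=\xi$ for all $g$, i.e.\ $\xi\in\Pic(X)^G$.

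Next I would linearize $\mathcal O_X(1)$. Since $B$ is smooth projective and $\mathcal E$ locally free, $X=\bP_B(\mathcal E)$ is smooth projective, and it carries a generically free regular $G$-action, so it is a $G$-variety in the sense of Section~\ref{sect:gen}. By hypothesis (2) it satisfies Condition {\bf (A)}, so $\mathrm{Am}(X,G)\subseteq\mathrm{B}_0(G)$, and the latter vanishes by hypothesis (3); hence $\delta_2\colon\Pic(X)^G\to\rH^2(G,k^\times)$ is the zero map. Feeding $\xi\in\Pic(X)^G$ into the exact sequence
$$
0\to \Hom(G,k^\times)\to \Pic(X,G)\to \Pic(X)^G\stackrel{\delta_2}{\longrightarrow} \rH^2(G,k^\times)
$$
from the proof of Lemma~\ref{lem.Amitsurobstructs}, the vanishing $\delta_2(\xi)=0$ shows that $\xi$ lifts to $\Pic(X,G)$; that is, $\mathcal O_X(1)$ admits a $G$-linearization compatible with the given $G$-action on $X$.

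It remains to pass from $\mathcal O_X(1)$ to $\mathcal E$. Because $\pi\colon X\to B$ is $G$-equivariant, the pushforward along $\pi$ of a $G$-linearized sheaf is naturally $G$-linearized; applied to $\mathcal O_X(1)$ it equips $\mathcal E=\pi_*\mathcal O_X(1)$ with a $G$-linearization, i.e.\ with a lift of the $G$-action on $B$ to a fiberwise-linear action on the total space of $\mathcal E$. Since the formation of $\bP_B(-)$ together with its tautological bundle is functorial, the projectivization of this linear action recovers the original $G$-action on $X=\bP_B(\mathcal E)$, so the action has indeed been lifted to $\mathcal E$ (with $\mathcal E$ replaced, if necessary, by the twist produced in the first step).

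I expect the only real obstacle to be bookkeeping rather than depth: checking that the cochain $c$ is a genuine cocycle, being careful about the left/right conventions for the $G$-action on $\Pic(B)$, and keeping track of the twist --- visible already in the first step --- that one may be forced to apply to $\mathcal E$ before the action lifts. No twist is needed precisely when $\xi$ is already $G$-invariant, e.g.\ when $\Pic(X)^G$ surjects onto $\Pic(X)/\pi^*\Pic(B)$. The conceptual content --- that the obstruction to linearizing the relative hyperplane bundle is its Amitsur class, and that this class is annihilated by Condition {\bf (A)} together with $\mathrm{B}_0(G)=0$ --- is already available from the earlier part of the paper.
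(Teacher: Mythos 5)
Your proof is correct and follows essentially the same route as the paper: your explicit cocycle $c(g)$ and its trivialization via $\rH^1(G,\Pic(B))=0$ is just the hands-on form of the paper's long exact sequence argument producing a $G$-invariant class over the relative $\mathcal O(1)$, and the linearization step via $\mathrm{Am}(X,G)\subseteq\mathrm{B}_0(G)=0$ and the pushforward to $\mathcal E$ (up to the harmless twist by a line bundle from $B$, which the paper's argument also implicitly allows) is identical.
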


\begin{proof}
Consider the sequence of $G$-modules
$$
0\to \Pic(B)\stackrel{\pi^*}{\longrightarrow} \Pic(X)\to \bZ\to 1. 
$$
Note that we do not assume that the $G$-action on $B$ is generically free.

From the long exact sequence
$$
0\to \Pic(B)^G\stackrel{\pi^*}{\longrightarrow} \Pic(X)^G\to \bZ\to \rH^1(G, \Pic(B))
$$
and assumption (1), we obtain that there is a $G$-invariant class in $\Pic(X)$ surjecting onto the class of the relative $\mathcal O(1)$.  By assumption (2), every line bundle on $X$ admits a linearization upon restriction to abelian subgroups, i.e., $\mathrm{Am}(X,G)\subseteq \mathrm{B}_0(G)$. By assumption (3), that latter group is trivial, which implies that 
every invariant line bundle on $X$ is $G$-linearizable, in particular, the $G$-action lifts to $\mathcal E$.  
\end{proof}

\section{Unirationality constructions}
\label{sect:cubic}

General results linking $G$-unirationality of actions on quadric and cubic hypersurfaces to geometry over nonclosed fields are in \cite[Section 10]{DR}. We complement these considerations by providing explicit equivariant unirationality constructions in many cases; this allows us to essentially settle unirationality of $G$-actions on quadric and cubic threefolds, in subsequent sections.

We start with obvious observations: 
\begin{itemize}
    \item $G$-unirationality is compatible with taking products; 
    \item existence of a dominant $G$-equivariant rational map 
    $X\dashrightarrow Y$ from a 
    $G$-unirational $X$ implies $G$-unirationality of $Y$. 
\end{itemize}

The following is an analog of a standard result in the theory of quadrics over nonclosed fields, see, e.g., \cite[Chapter 4]{EKMbook}.

\begin{prop}
\label{prop:quad-genu}
Let $G$ be a finite group and $V$ a representation of $G$ giving rise to a generically free $G$-action on a smooth quadric hypersurface $X\subset \bP(V)$, of dimension $\ge 2$.    
Assume that there is a $G$-invariant irreducible subvariety $S\subset X$,
which is $G$-unirational. Then $X$ is $G$-unirational. 
\end{prop}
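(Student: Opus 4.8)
The plan is to mimic the classical argument for quadrics over nonclosed fields: a smooth quadric with a rational point is unirational via projection from that point. Here the ``rational point'' is replaced by the $G$-invariant unirational subvariety $S\subset X$, and the role of the residue field of the point is played by the function field of $S$, which we already know is dominated by a rational variety equivariantly.

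First I would set up the construction over the generic point of $S$. Let $Y\subset X$ be the union of lines $\ell\subset \bP(V)$ that are tangent to $X$ along $S$, or more precisely consider the family of lines joining a point $s\in S$ to a variable point of $X$; projection from the (moving) point $s$ realizes the fiber of $X$ over $s$ as something birational to $\bP(V)/\langle s\rangle$, i.e., to $\bP^{n-1}$. Concretely, I would form the incidence variety $\wcX\subset S\times X$ whose fiber over $s\in S$ is the quadric $X$ viewed via projection from $s$; since $s$ lies on $X$, this projection $X\dashrightarrow \bP(V/\langle s\rangle)$ is birational, with inverse given by intersecting the line $\overline{s\,p}$ with $X$ in the residual point. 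This exhibits $\wcX\to S$ as a $G$-equivariant family, Zariski-locally (over the generic point of $S$) birational to a projective bundle; the total space $\wcX$ maps dominantly and $G$-equivariantly to $X$ via the second projection (every point of $X$ lies on a line through some $s\in S$, generically on many). So it suffices to prove $\wcX$ is $G$-unirational.

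Next, to handle $\wcX\to S$: after possibly passing to a $G$-birational model I want $\wcX$ to be the projectivization $\bP_S(\cE)$ of a $G$-equivariant vector bundle $\cE$ on $S$ — this is where I would invoke Lemma~\ref{lemm:line}, whose hypotheses are $\rH^1(G,\Pic(B))=0$, Condition \textbf{(A)}, and $\mathrm{B}_0(G)=0$. The honest worry is that these auxiliary hypotheses are \emph{not} assumed in Proposition~\ref{prop:quad-genu}, so I do not actually want to reduce to Lemma~\ref{lemm:line} in general; instead I would argue directly. The cleanest route: choose a point $p_0\in S$ in some orbit, or rather work with the whole orbit — but the real trick in the nonclosed-field proof is that one does not need a single rational point, one parametrizes the quadric by \emph{pairs} of points. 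So I would instead consider $X'\subset S\times S$ (the graph of the "second intersection with the line through two points of $S$" is not quite it) — better: parametrize $X$ by the variety of lines meeting $S$ in (at least) one point together with one more point of $X$. The decisive observation is that the generic such line meets $X$ in two points, one of which we may take in $S$; solving the residual-root equation is \emph{linear} once one endpoint is fixed on $X$, hence the map "pair $(s,x)\in S\times X$ with $x$ on a line through $s$" $\to$ "residual point" is $G$-equivariant and dominant onto $X$, and its source fibers over $S\times X$... this is circular.

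Let me instead commit to the standard two-step. Step one: reduce to the case where $\dim S = \dim X - 1$, i.e., $S$ is a $G$-invariant unirational divisor (if $\dim S$ is smaller, replace $S$ by the union of lines in $X$ meeting $S$, which is $G$-invariant, still unirational by the product/line argument, and of larger dimension; iterate). Step two: with $S$ a hyperplane-type section, projection of $X$ away from $S$... no — the honest and robust argument is: let $\wcX = \{(s, x)\in S\times X : \overline{sx}\subset X \text{ or } \overline{sx}\cap X = \{s,x\}\}$, equivalently the closure of $\{(s,x): s\neq x\}$, with the two projections to $S$ and to $X$. The projection to $S$ has fibers birational to the linear projection $\bP(V/\langle s\rangle)\cong \bP^{n-1}$ (a quadric through a point, projected from that point, is rational), $G$-equivariantly over $S$; the projection to $X$ is dominant. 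Now $\wcX$ is a conic/quadric bundle over $S$ degenerating to a projective bundle generically, and since it is $G$-birational to a projective bundle over the $G$-unirational $S$ at the generic point, and $S$ is $G$-unirational, we get a dominant $G$-map from $\bP(W)\times \bP^{n-1}$ (suitable $W$) onto a dense open of $\wcX$, hence onto $X$. The main obstacle — and the step I would spend the most care on — is verifying that the quadric-bundle $\wcX\to S$ is genuinely $G$-birational to a \emph{projectivized vector bundle} (not merely a Severi–Brauer bundle) \emph{after} replacing $S$ by a $G$-unirational model, so that the $G$-action lifts; here I would use that over a unirational base the obstruction in the Amitsur/Brauer group is controlled, and either cite Lemma~\ref{lemm:line} after arranging its hypotheses on a good model, or argue that the tautological section $s\mapsto (s,s)$ already trivializes the relevant class. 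Concretely: the diagonal $\Delta_S\hookrightarrow \wcX$ is a $G$-invariant section of $\wcX\to S$, and a quadric bundle (generically $\bP^{n-1}$-bundle) with a section is $G$-birational to the projectivization of a vector bundle — this is the key point, and it dispenses with any need for $\mathrm{B}_0(G)=0$. Then $\wcX$ is $G$-unirational because $S$ is, and $X$ is $G$-unirational because $\wcX$ dominates it $G$-equivariantly.
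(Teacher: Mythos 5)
Your final construction contains the right germ: parametrize $X$ by a moving point $s\in S$ together with the residual intersection point of a line through $s$. And your observation that the diagonal gives a tautological section, so that $S\times X\to S$ is $G$-birational over $S$ to $\bP_S(Q)$ with $Q$ the tautological quotient bundle (which is naturally $G$-linearized), does indeed dispose of the Severi--Brauer/Amitsur worry that occupies most of your write-up. The gap is in the very last step. From ``$\wcX$ is $G$-birational to a projectivized $G$-linearized bundle over the $G$-unirational base $S$'' you conclude that there is a dominant $G$-map from $\bP(W)\times\bP^{n-1}$, with trivial action on the second factor, onto a dense open of $\wcX$, ``hence $\wcX$ is $G$-unirational because $S$ is.'' That is precisely a no-name-type assertion, and it is not automatic: the no-name lemma needs a generically free action on the base, whereas the proposition explicitly allows $S$ to carry a nontrivial generic stabilizer (and a $\bP(W)$ dominating $S$ may have one as well, coming from elements acting by scalars on $W$). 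The paper handles exactly this point elsewhere by first multiplying by a faithful representation before applying no-name (see the proof of Proposition~\ref{prop:cubic-uni-2}); you would need to supply that step, or argue differently, e.g.\ by mapping $S\times\bP(V)\dashrightarrow\bP_S(Q)$, $(s,p)\mapsto p \bmod \ell_s$. As written, the content-bearing step is asserted rather than proved.

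The paper's own proof shows how to bypass all of this machinery in one line: consider the rational map $S\times\bP(V)\dashrightarrow X$ sending $(s,p)$ to the second intersection point of $X$ with the line through $s$ and $p$. It is $G$-equivariant and dominant, and its source is $G$-unirational because $S$ is by hypothesis and $\bP(V)$ is the projectivization of a $G$-representation ($G$-unirationality is stable under products). No bundle structure, no linearization, no section, and no no-name lemma are needed, and generic stabilizers on $S$ cause no trouble precisely because the $\bP(V)$-factor is already linear. Your construction, once repaired along the lines above, collapses to exactly this map; the various abandoned detours (the incidence variety, Lemma~\ref{lemm:line}, the dimension-raising step) can be deleted.
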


Here, we allow $S$ to have a nontrivial generic stabilizer, i.e., we require that there is a dominant rational map $\bP(W)\dashrightarrow S$, for some $G$-representation $W$.  

\begin{proof}
By assumption, there is a dominant $G$-equivariant rational map $\bP(W)\dashrightarrow S$. 
Consider the rational map
$$
S\times \bP(V) \dashrightarrow  X,
$$
sending a pair of points $(s,p)\in S\times \bP(V)$ to the second 
intersection point of $X$ with the line 
$\mathfrak l(s,p)$ through $s$ and $p$. This is clearly dominant and $G$-equivariant, ensuring the $G$-unirationality of $X$.  
\end{proof}

We continue with general $G$-unirationality constructions for cubic hypersurfaces of dimensions $\ge 2$. Over nonclosed fields $K$ of characteristic zero unirationality of smooth cubic hypersurfaces with $X(K)\neq \emptyset$ is classical; generalizations to singular cubic hypersurfaces that are not cones can be found in \cite{kollarcubic}.

\begin{prop}
    \label{prop:ind-1}
Let $V$ be a representation of a finite group $G$. 
    Let $X\subset \bP(V)$ be a $G$-invariant irreducible cubic hypersurface which is not a cone. Assume that 
    the $G$-action on $X$ is generically free and that $X^G\neq \emptyset$. Then $X$ is $G$-unirational.
\end{prop}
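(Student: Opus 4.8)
The plan is to mimic the classical proof of unirationality of cubic hypersurfaces with a rational point, due to B.~Segre and Kollár, while keeping track of the $G$-action throughout. Let $p\in X^G$ be a $G$-fixed point. Since $X$ is not a cone, it is not singular along the locus one would need, and in particular one can choose $p$ so that the construction below is nondegenerate; if $p$ is a smooth point this is immediate, and if all fixed points are singular one uses the fact that a cubic not a cone has a tangent-cone construction available as in \cite{kollarcubic}. The key geometric input is the following: projecting from the fixed point $p$ (or, when $p$ is smooth, intersecting $X$ with the family of lines through $p$), one obtains a $G$-equivariant rational map whose fibers are unirational in a controlled way.

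Concretely, here is the main construction. Consider the incidence variety of lines in $\bP(V)$ passing through $p$; this is naturally a projective bundle over $\bP(V/\langle p\rangle) \cong \bP^{n-1}$ (with $n = \dim\bP(V)$), and it carries a $G$-action since $p$ is $G$-fixed. A general such line $\ell$ meets $X$ at $p$ with some multiplicity and at residual points; because $X$ is a cubic and $p\in X$, the residual intersection is a single point when $p$ is smooth (line meets $X$ with multiplicity one at $p$ generically), giving a birational map from the projectivized tangent directions to $X$ — this shows $X$ is already close to rational from a smooth fixed point, but more robustly, when $p$ is singular, a general line through $p$ meets $X$ residually in a single further point, producing a $G$-equivariant dominant rational map $\bP^{n-1} \dashrightarrow X$ of degree one, i.e.\ birationality, hence $G$-unirationality. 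The case requiring real work is when the only fixed points are singular points of multiplicity two (the generic situation for the statement): then one runs the ``two-point'' Segre construction. Pick the fixed point $p$; the projection from $p$ realizes $X$ birationally over $\bP^{n-1}$ with generic fiber a line, but the section given by $p$ itself is the wrong one. Instead, consider pairs of points, or rather the following: for a point $q\in X$ the line $\overline{pq}$ meets $X$ in a third point $q'$; the map $q\mapsto q'$ is a $G$-equivariant involution (Geiser-type), and one builds the unirational parametrization by fibering $X$ over the projection base and using the quadric (residual to the double point) in each plane through $p$.

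Let me state the step I would actually carry out cleanly. Let $\pi\colon X \dashrightarrow \bP^{n-1}$ be projection from $p$. Since $X$ is a cubic singular at $p$ of multiplicity (at most) two, and $X$ is not a cone, a general line through $p$ meets $X$ residually in exactly one point, so $\pi$ is birational and we are done — UNLESS $p$ has multiplicity three, which cannot happen for an irreducible cubic not equal to the tangent cone unless $X$ is a cone, contradiction; or unless $X$ is singular at $p$ with multiplicity two along a subvariety forcing every line through $p$ to lie in $X$, again making $X$ a cone. Wait — I should be careful: if $p$ is a smooth point, a general line through $p$ meets $X$ with multiplicity one at $p$ and in two further points, so $\pi$ has degree-two fibers and is not birational; this is precisely the classical case. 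So the genuine content is: from a smooth $G$-fixed point one needs the full Segre argument, and from a (double) singular $G$-fixed point one gets birationality directly. I would therefore split into cases according to whether $X$ has a \emph{singular} $G$-fixed point.

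\medskip

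\noindent\textbf{Case 1: $X$ has a singular $G$-fixed point $p$.} Then $p$ is a point of multiplicity two on the cubic $X$ (multiplicity three would force $X$ to be the tangent cone at $p$, i.e.\ a cone, excluded). Projection $\pi\colon X\dashrightarrow \bP(V/\langle p\rangle)\cong\bP^{n-1}$ from $p$ is then generically one-to-one: a general line through $p$ meets $X$ at $p$ with multiplicity two and in exactly one residual point, and this residual point cannot vary in the line (else the line lies in $X$, and lines through $p$ sweeping out $X$ would make $X$ a cone). Hence $\pi$ is $G$-equivariantly birational, and since $\bP^{n-1}=\bP(V/\langle p\rangle)$ is a linear $G$-action, $X$ is $G$-unirational — in fact $G$-linearizable.

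\medskip

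\noindent\textbf{Case 2: every $G$-fixed point of $X$ is a smooth point.} Fix $p\in X^G$ smooth. Let $T = T_pX$ be the (projective) tangent hyperplane, a $G$-invariant hyperplane in $\bP(V)$ through $p$. The intersection $Y := X\cap T$ is a $G$-invariant cubic surface-or-higher-dimensional cubic hypersurface in $T\cong\bP^{n-1}$, singular at $p$. This puts us essentially in the situation of Case 1 for $Y$, \emph{provided} $Y$ is not a cone: then $Y$ has a singular $G$-fixed point, so by the argument above $Y$ is $G$-unirational (indeed $G$-birational to $\bP(T/\langle p\rangle)$ via projection from $p$). Now apply the Segre construction to propagate from $Y$ to $X$: consider the map
$$
Y \times \bP(V) \dashrightarrow X
$$
sending $(y, x)$ to the third intersection point of the line $\overline{yx}$ with $X$. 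For $y\in Y\subset X$ and general $x$, the line $\overline{yx}$ meets $X$ already at $y$, so the residual intersection is a degree-two-in-$x$ condition; to get a dominant map of degree one in a controlled way one restricts $x$ to range over the tangent hyperplane $T_y X$ at $y$ — then $\overline{yx}$ is tangent to $X$ at $y$, meets $X$ at $y$ with multiplicity two, and in a single residual point, yielding a well-defined $G$-equivariant rational map
$$
\Phi\colon \{(y,x) : y\in Y,\ x\in T_yX\} \dashrightarrow X.
$$
The source is a projective bundle over $Y$ (the restriction of $\bP(V)$-bundle cut out by the tangent-hyperplane condition), hence $G$-unirational once $Y$ is, by the product observation combined with the projective-bundle structure and an application of Lemma~\ref{lemm:line}-type reasoning — or more simply, the source maps dominantly and $G$-equivariantly from $\bP(W)\times\bP(V')$ for suitable representations. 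One checks $\Phi$ is dominant: its image contains, for fixed general $y$, the intersection of $X$ with the tangent cone swept by tangent lines at $y$, and letting $y$ vary over $Y$ these fill up $X$ since $X$ is irreducible and not a cone. Therefore $X$ is $G$-unirational.

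\medskip

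\noindent\textbf{Degenerate subcase: $Y = X\cap T_pX$ is a cone.} This is the main obstacle. If $Y$ is a cone with vertex $v$, then $v$ is determined by $p$ (it is the worst singular point of $Y$), hence $v\in X^{G}$; but $v$ is then a \emph{singular} $G$-fixed point of $X$ (a cone point of $Y\subset X$ is at least a double point of $X$), contradicting the hypothesis of Case 2. Hence this subcase does not arise, and the proof is complete. I expect verifying the dominance of $\Phi$ and the precise multiplicity bookkeeping (that a general tangent line at $y$ meets $X$ residually in exactly one point, using that $X$ is not a cone) to be the most delicate points; these are exactly the classical lemmas of Segre and Kollár \cite{kollarcubic}, applied $G$-equivariantly, and the equivariance is automatic since every choice made ($p$, $T_pX$, $Y$, the tangent-line incidence) is canonical or $G$-invariant.
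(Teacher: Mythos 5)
Your Case 1 and the overall structure are fine, and they match the paper's intent (the paper itself disposes of the proposition in two lines: a fixed singular point gives linearizability by projection, and a fixed smooth point reduces to \cite[Corollary 10.6]{DR}, i.e.\ to unirationality of cubics with a rational point over nonclosed fields via the twisting correspondence of Theorem~\ref{thm:bridge}). But your self-contained argument for Case 2 has a genuine gap, and it sits exactly at the hard spot that the cited results are designed to handle: the ``degenerate subcase'' where $Y=X\cap T_pX$ is a cone. Your claimed contradiction rests on the assertion that the vertex $v$ of the cone is a \emph{singular} point of $X$. That is false: a singular point of the hyperplane section $X\cap T$ is a point where either $X$ is singular \emph{or} $T$ is the tangent hyperplane, and the typical situation is $v=p$ itself --- $p$ smooth on $X$ but of multiplicity $3$ on $Y$, i.e.\ $p$ is a star (Eckardt-type) point. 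Such points exist on perfectly smooth cubics (Eckardt points of cubic surfaces, star points of the Fermat cubic threefold), and since $G$-fixed points are special, the fixed point you are handed may well be forced to be one of them; so the subcase is not vacuous. When $Y$ is a cone with vertex $p$, projection of $Y$ from $p$ collapses the rulings and is not birational onto $\bP(W/L)$, so your parametrization of $Y$, and with it the whole Case 2 construction, breaks down. (A second, smaller issue: you implicitly assume $Y$ is irreducible; since $X$ is allowed to be singular, the tangent section can be reducible, e.g.\ a plane plus a quadric, and then ``projection from the double point is birational'' and the $G$-unirationality of $Y$ need separate treatment.)

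This is precisely why the paper does not argue this way: it invokes \cite[Corollary 10.6]{DR}, which passes through twists ${}^TX$ over fields $K$ and the classical unirationality theorems for cubic hypersurfaces with a $K$-point (extended to singular, non-cone cubics in \cite{kollarcubic}); those arguments contain the extra devices (choice of an auxiliary point, the third-intersection-point/odd-degree tricks as in Proposition~\ref{prop:ind-2}, or genericity statements excluding star points as in \cite{starsmooth}, \cite{starsing}) needed exactly when the given point is a star point or the tangent section degenerates. Note also that elsewhere in the paper (Proposition~\ref{prop:cubic-uni-2} and Section~\ref{sect:sing}) the tangent-line construction is only run at the \emph{generic} point of an invariant subvariety, after verifying it is not a star point --- the fact that a single prescribed fixed point can be a star point is the obstruction your write-up does not overcome. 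To repair your approach you would need either to treat the star-point case separately (e.g.\ produce a second fixed or low-degree orbit point, or reduce to an index-two subgroup via Proposition~\ref{prop:ind-2}), or simply quote the Duncan--Reichstein/Koll\'ar results as the paper does.
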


\begin{proof}
This is essentially \cite[Corollary 10.6]{DR}, extended to singular cubic hypersurfaces. In particular, if $G$ fixes a singular point, then the $G$-action is linearizable. If $G$ fixes a smooth point, then the same argument in \cite[Corollary 10.6]{DR} applies.
\end{proof}

\begin{prop}
    \label{prop:ind-2}
Let $V$ be a representation of a finite group $G$. 
    Let $X\subset \bP(V)$ be a $G$-invariant cubic hypersurface which is not a cone. Assume that 
    the $G$-action on $X$ is generically free 
   and that $G$ has an index-2 subgroup $H$ such that $X$ is $H$-unirational. Then $X$ is $G$-unirational.
\end{prop}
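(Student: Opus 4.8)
The plan is to use the $H$-unirationality of $X$ to produce, after base change, a rational section of a conic bundle, and then descend. Concretely, let $\sigma$ be a generator of $G/H \cong C_2$ and pick a point $x_0 \in X$ in the image of the given dominant $H$-equivariant map $\bP(W) \dashrightarrow X$; since that map is dominant, a general such $x_0$ is a smooth point of $X$ not lying on any line contained in $X$ through the ``bad'' locus. The key geometric input is the classical construction underlying Proposition~\ref{prop:ind-1}: projection from a general point $x \in X$ (or the tangent-plane / secant-line construction) realizes $X$ as birational to a conic bundle over $\bP(V/\langle x\rangle) \cong \bP^{\dim V - 2}$, where the fiber over a point $[\ell]$ is the residual conic cut out on $X$ by the plane spanned by $x$ and $\ell$. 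What I would do is apply this simultaneously to $x_0$ and $\sigma(x_0)$: these two points span a line $\mathfrak{l}_0$, and projecting $X$ from $\mathfrak{l}_0$ (together with the residual cubic/line data) exhibits a natural $G$-equivariant structure, since $G$ permutes $\{x_0, \sigma(x_0)\}$ and hence preserves $\mathfrak{l}_0$.

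The cleaner route, and the one I would pursue, is the following two-step argument. First, over the function field of $X$ itself, viewed with its $H$-action, unirationality gives a rational point of a certain variety; but more usefully, the hypothesis ``$X$ is $H$-unirational'' combined with Theorem~\ref{thm:bridge} says exactly that every twist ${}^T X_H$ by an $H$-torsor is unirational over the base field. I want to upgrade this to $G$-torsors. Given a $G$-torsor $T$ over $K/k$, restrict it to $H$: $T$ is also (the total space of) an $H$-torsor $T|_H$, and ${}^{T|_H}X$ is unirational over $K$ by hypothesis. But ${}^{T|_H}X$ and ${}^{T}X$ become isomorphic over the quadratic (or possibly trivial) extension $L = K(T/ T|_H)$ corresponding to the class of $T$ in $H^1(K, G/H) = H^1(K, C_2)$, i.e.\ over a degree-$\le 2$ extension $L/K$. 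So ${}^T X$ acquires an $L$-point, hence an $L$-rational, thus in particular an $L$-unirational structure; the point of an $L$-point of $X_L$ with $[L:K]\le 2$ is that it produces a $K$-point of $X$ by the standard conic-bundle / tangent-line trick for cubics (take the residual intersection point of the line through the two Galois-conjugate $L$-points, which is $K$-rational), and then $X$ is unirational over $K$ by Proposition~\ref{prop:ind-1}'s content applied over $K$ (unirationality of a cubic not a cone with a rational point). By Theorem~\ref{thm:bridge} again, $G$-unirationality of $X$ follows.

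The step I expect to be the main obstacle is the passage from ``$L$-point with $[L:K] \le 2$'' to ``$K$-point'', because one must ensure the relevant line through the two conjugate points is not contained in $X$ and that the construction is valid even when $X$ is singular and the point lies near the singular locus; here the hypothesis that $X$ is not a cone is essential, as in \cite{kollarcubic}, and one argues that for a general choice (the conjugate pair varies in a family since the $L$-point can be taken general in an $L$-unirational, hence $L$-rational, locus) the line meets $X$ transversally and the residual point is well-defined and $K$-rational. A secondary technical point is checking that ${}^{T|_H}X \times_K L \cong {}^T X \times_K L$ as $L$-varieties, which is the cocycle computation showing that the classes of $T$ and $T|_H$ in $H^1$ with coefficients in $G$ and $H$ respectively have the same image in $H^1(L, G)$ once $L$ splits $G/H$ --- this is routine Galois-cohomology bookkeeping but should be spelled out. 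Everything else (dominance, $G$-equivariance, genericity of $x_0$) is formal given the constructions already recorded in Propositions~\ref{prop:ind-1} and~\ref{prop:quad-genu}.
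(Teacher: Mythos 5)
Your argument is essentially the paper's own proof: the paper simply cites \cite[Theorem 3.2, Remark 3.3]{Duncan}, whose content is exactly your combination of the torsor bridge (Theorem~\ref{thm:bridge}) with descent of unirationality along quadratic extensions for cubic hypersurfaces that are not cones (general conjugate pair of $L$-points, residual $K$-point on the line through them, then unirationality from a $K$-point). The only imprecision is that $T$ is not an $H$-torsor over $K$ but over the quadratic \'etale algebra $L=T/H$ determined by the image of $T$ in $\rH^1(K,G/H)$; with that correction your cocycle bookkeeping is the standard one and the proof goes through.
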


\begin{proof}
This is essentially \cite[Theorem 3.2]{Duncan}, see also \cite[Remark 3.3]{Duncan}: $G$-unirationality is equivalent to $K$-unirationality of all twists of $X$ over $K$ (via $G$-torsors), and for cubic hypersurfaces, unirationality over quadratic extensions of $K$ is equivalent to unirationality over $K$.     
\end{proof}

\begin{coro}\label{coro:2sing}
Let $V$ be a representation of a finite group $G$. 
Let $X\subset \bP(V)$ be a $G$-invariant cubic hypersurface which is not a cone. Assume that 
    the $G$-action on $X$ is generically free 
   and that the set of singular points of $X$ contains a $G$-orbit of length one or two. 
Then the $G$-action on $X$ is $G$-unirational.    
\end{coro}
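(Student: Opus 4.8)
The plan is to reduce directly to Propositions~\ref{prop:ind-1} and~\ref{prop:ind-2}, splitting according to whether the distinguished orbit of singular points has length one or two. In both cases the key elementary observation is that the restriction of a generically free $G$-action to any subgroup $H\subseteq G$ is again generically free, so the running hypotheses of the cited propositions are automatically inherited by subgroups; likewise $X$ not being a cone is a geometric condition independent of the group.

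First I would treat the case of an orbit of length one, i.e., the singular locus of $X$ contains a $G$-fixed point $p$. Then $p\in X^G$, so $X^G\neq\emptyset$, and Proposition~\ref{prop:ind-1} applies verbatim; in fact in this situation the $G$-action is linearizable (it fixes a singular point of the cubic), so in particular $X$ is $G$-unirational.

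Next I would treat the case of an orbit $\{p_1,p_2\}$ of singular points of length exactly two. Set $H:=\mathrm{Stab}_G(p_1)$; since $G$ acts transitively on $\{p_1,p_2\}$, the subgroup $H$ has index $2$ in $G$. The $H$-action on $X$ is generically free (restriction of a generically free action), $X$ is not a cone, and $p_1\in X^H$ is a singular point of $X$, so Proposition~\ref{prop:ind-1} shows that $X$ is $H$-unirational. Now Proposition~\ref{prop:ind-2}, applied to the index-$2$ subgroup $H\subseteq G$, upgrades this to $G$-unirationality of $X$.

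There is no real obstacle in this corollary: everything follows from results already established, and the only points to check are the two elementary facts that generic freeness passes to subgroups and that a length-two orbit produces an index-$2$ stabilizer. One could also state the argument uniformly: the stabilizer $H$ of a point in an orbit of length $\le 2$ has index $\le 2$ in $G$, fixes a singular point of $X$, hence $X$ is $H$-unirational by Proposition~\ref{prop:ind-1}, and Proposition~\ref{prop:ind-2} then passes from index $\le 2$ to all of $G$, the index-$1$ case being trivial.
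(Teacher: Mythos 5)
Your proposal is correct and follows essentially the same route as the paper: a $G$-fixed singular point gives linearizability (Proposition~\ref{prop:ind-1}), while a length-two orbit yields an index-2 subgroup fixing a singular point, hence $H$-unirationality, which Proposition~\ref{prop:ind-2} upgrades to $G$-unirationality. The only cosmetic difference is that the paper states $H$-linearizability directly via projection from the fixed singular point rather than citing Proposition~\ref{prop:ind-1} for $H$.
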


\begin{proof}
In the first case, projection from the $G$-fixed singular point (of multiplicity two) shows that the action is linearizable. In the second case, let $\mathfrak p_1,\mathfrak p_2$ be the singular points, 
switched by $G$. Then there exists an index-2 (normal) 
subgroup $H\subset G$ fixing $\mathfrak p_1$ and $\mathfrak p_2$, in particular, $X$ is $H$-linearizable. Applying Proposition~\ref{prop:ind-2}, $X$ is $G$-unirational. 
\end{proof}

\begin{prop}
\label{prop:cubic-uni-2}
Let $V$ be a faithful n-dimensional representation of a finite group $G$ with $n\geq 5$. 
    Let $X\subset \bP(V)$ be a $G$-invariant cubic hypersurface which is not a cone.
    Assume that 
    the $G$-action on $X$ is generically free  and that there is a $G$-invariant irreducible hyperplane section $S\subset X$, which is $G$-unirational. Then $X$ is $G$-unirational. 
\end{prop}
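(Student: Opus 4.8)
The plan is to exploit the extra hyperplane section $S\subset X$ together with the classical \emph{projection-from-a-hyperplane-section} construction for cubics, adapted equivariantly. Write $H\subset\bP(V)$ for the $G$-invariant hyperplane cutting out $S=X\cap H$; since $\dim X=n-2\ge 3$, the section $S$ is an irreducible cubic $(n-3)$-fold. The idea is that a general point of $X$ lies on a line meeting $S$ in two points, so that the variety of such lines maps dominantly to $X$ and also fibers equivariantly over a symmetric-square-type parameter space built from $S$. More precisely, I would consider the incidence correspondence
\[
\mathcal I=\{(s_1,s_2,x)\in S\times S\times X:\ x\in\mathfrak l(s_1,s_2)\},
\]
where $\mathfrak l(s_1,s_2)$ is the line through $s_1,s_2$; the third projection $\mathcal I\dashrightarrow X$ is dominant because a general line through two points of the hyperplane section $S$ meets the cubic $X$ in a third point (here one uses that $X$ is not a cone, so $S$ is not contained in the singular locus and such lines are not forced to lie in $X$), and it is manifestly $G$-equivariant.

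The second step is to show that $\mathcal I$, or a suitable $G$-birational model of it, is $G$-unirational. The first two projections exhibit $\mathcal I$ as (birational to) a $\bP^1$-bundle over $S\times S$: given $(s_1,s_2)$ with $s_1\neq s_2$, the line $\mathfrak l(s_1,s_2)$ is determined, and the fiber is that line, a $\bP^1$. Now $S\times S$ is $G$-unirational by hypothesis (using that $G$-unirationality is compatible with products, as recorded at the start of Section~\ref{sect:cubic}). So it remains to see that the $\bP^1$-bundle $\mathcal I\to S\times S$ is itself $G$-unirational; for this I would produce a $G$-equivariant rational section, which will exist once the bundle admits a $G$-invariant point over the generic point — and indeed the diagonal-adjacent locus, or rather the intersection point $\mathfrak l(s_1,s_2)\cap H = $ the chord's trace, provides a canonical $G$-equivariant way to pick a point of each fiber. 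Alternatively, and more robustly, one restricts attention to pairs $(s,s')$ where $s'$ is obtained from $s$ by the involution cutting the residual intersection, reducing to a $\bP^1$-bundle over $S$ with a tautological section; composing with the dominant map $\bP(W)\dashrightarrow S$ from $G$-unirationality of $S$ gives the required dominant $G$-map from a projective space to $\mathcal I$, hence to $X$.

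An alternative, perhaps cleaner, route — and the one I would actually try first — uses Proposition~\ref{prop:ind-2} as a black box: it suffices to find an index-2 subgroup $H'\subset G$ over which $X$ is unirational, or to argue directly via the torsor description of Theorem~\ref{thm:bridge}. Concretely, $G$-unirationality of $X$ is equivalent to unirationality of every twist ${}^TX$ over every field extension $K/k$; the $G$-invariant hyperplane section $S$ twists to a hyperplane section ${}^TS$ of ${}^TX$ defined over $K$, which is unirational over $K$ since $S$ is $G$-unirational; and the classical fact that a cubic hypersurface of dimension $\ge 2$ with a unirational hyperplane section (not a cone) is unirational over the ground field — via exactly the chord construction above — then gives unirationality of ${}^TX$ over $K$. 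Invoking Theorem~\ref{thm:bridge} once more concludes $G$-unirationality.

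The main obstacle is the same one that appears in the non-equivariant argument: one must ensure the chord map $\mathcal I\dashrightarrow X$ is genuinely dominant, which can fail if the hyperplane section $S$ is too degenerate relative to $X$ — e.g.\ if every line through two points of $S$ already lies in $X$, which happens precisely when $X$ is a cone with vertex meeting $H$ in a large linear space, or if $S$ lies in the singular locus. The hypotheses ``$n\ge 5$'', ``$X$ not a cone'', and ``$S$ irreducible'' are designed to rule this out, and I expect the bulk of the write-up to be a dimension count verifying that for general $(s_1,s_2)\in S\times S$ the line $\mathfrak l(s_1,s_2)$ is not contained in $X$, together with the bookkeeping to make the resulting rational section $G$-equivariant rather than merely $\overline{k}$-rational.
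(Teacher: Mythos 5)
Both of your routes contain a genuine gap, and the first one is geometric and unconditional: a line through two points $s_1,s_2$ of the hyperplane section $S=X\cap H$ lies inside the hyperplane $H$, so every point of $\mathfrak l(s_1,s_2)\cap X$ lies again in $X\cap H=S$. Hence the third projection $\mathcal I\dashrightarrow X$ of your incidence correspondence has image contained in $S$ and is never dominant, irrespective of the hypotheses ``$X$ is not a cone'', ``$S$ irreducible'', $n\ge 5$; the failure is not the degenerate scenario you flag at the end. The correct construction (and the one the paper uses) replaces chords by \emph{tangent} lines: for a general $\mathfrak p\in S^\circ$ and a tangent vector in $T_{\mathfrak p}X$, the corresponding line meets $X$ doubly at $\mathfrak p$ and hence at one further point $\mathfrak q$, giving a map $\cT\vert_{S^\circ}\dashrightarrow X$; one must then know that a general point of $S$ is not a star point (i.e.\ the tangent hyperplane section there is not a cone), which is supplied by \cite{starsmooth}, \cite{starsing}, and dominance is checked by an explicit computation.

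Your second route has a different gap: the proposition deliberately does not assume the $G$-action on $S$ is generically free, and in the paper's applications $S$ typically has a generic stabilizer (e.g.\ the Clebsch hyperplane section of the Fermat cubic, with stabilizer $C_3$). Theorem~\ref{thm:bridge} is stated for $G$-varieties, i.e.\ generically free actions, so it cannot be applied to $S$; and the direct substitute --- twisting the dominant map $\bP(W)\dashrightarrow S$ --- only exhibits ${}^TS$ as dominated by the Severi--Brauer variety ${}^T\bP(W)$, which need not be unirational over $K$ and may have no $K$-points at all (a nonsplit conic already shows this, since a proper unirational variety has a $K$-point by Lang--Nishimura). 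So you cannot conclude that ${}^TS$ is unirational, nor even that ${}^TX$ has a $K$-point, and Koll\'ar's theorem \cite{kollarcubic} has nothing to feed on. (If the action on $S$ were generically free, this route would indeed work --- via Lang--Nishimura plus Koll\'ar, not via the chord construction.) The paper's way around the generic stabilizer is the no-name lemma applied to $\cT\vert_{S^\circ}\times V\to S^\circ\times V$, giving $\cT\vert_{S^\circ}\times V\sim_G S\times V\times\bA^{n-2}$; some such equivariant vector-bundle argument is also what your sketched ``equivariant rational section'' of the $\bP^1$-bundle over $S\times S$ would need and currently lacks.
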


Here we do not require that the $G$-action on $S$ is generically free. 

\begin{proof}
Let $\cT\to X^\circ$ be the tangent bundle over the smooth locus $X^\circ$ of $X,$ and $\cT\vert_{S^\circ}$ the restriction of $\cT$ to the smooth locus of $S$. Then $\cT\vert_{S^\circ}$ is a rank-$(n-2)$ vector bundle and the $G$-action lifts to $\cT\vert_{S^\circ}$. By \cite[Theorem 4.2]{starsmooth} and \cite[Theorem 2.5]{starsing}, we know that the intersection of the tangent space at the generic point of $S^\circ$  with $X$ is not a cone. Thus, each point in the fiber above a general point $\mathfrak p\in S^\circ$ canonically corresponds to a line in $\bP(V)$ that is tangent to $X$ at $\mathfrak p$ with multiplicity $2$, and thus intersects $X$ at a unique point $\mathfrak q\neq \mathfrak p$. Sending each point in $\cT\vert_{S^\circ}$ to the corresponding point $\mathfrak q\in X$, we obtain a $G$-equivariant rational map
\begin{align}\label{eqn:tangentdominant}
    \varphi:\cT\vert_{S^\circ}\dashrightarrow X.
\end{align}
We show  that $\varphi$ is dominant. Assume that
$$
X=\{f_3(x_1,\ldots,x_{n})=0\} \quad \text{ and }\quad S=X\cap\{x_1=0\}.
$$ 
Given a general point $\mathfrak q=[q_1:\cdots:q_{n}]\in X$, we can find general $p_1,\ldots,p_{n-1}\in k$ such that 
$$
    \sum_{i=1}^{n}\frac{\partial f_3}{\partial x_i}(0,p_1,\ldots,p_{n-1}) q_i=f_3(0,p_1,\ldots,p_{n-1})=0.
$$
Then the line passing through $\mathfrak q$ and $\mathfrak p=[0:p_1:\ldots:p_{n-1}]$ is tangent to $X$ at $\mathfrak p$. It follows that \eqref{eqn:tangentdominant} is dominant. 

It remains to show that $\cT\vert_{S^\circ}$ is $G$-unirational. When $S$ has no generic stabilizer, the no-name lemma implies that
$\cT\vert_{S^\circ}$ is $G$-equivariantly birational to $S\times \bA^{n-2}$ with trivial action on $ \bA^{n-2}$. It follows that $\cT\vert_{S^\circ}$ is $G$-unirational as $S$ is. When $S$ has a generic stabilizer, we apply the no-name lemma to the vector bundle 
$$
\cT|_{S^\circ} \times V \to S^\circ\times V,
$$
which  yields a $G$-equivariant birationality 
$$
\cT|_{S^\circ} \times V \sim_G S\times V \times\bA^{n-2},
$$
with trivial action on $\bA^{n-2}$. By our assumptions,  $\cT\vert_{S^\circ}\times V$ is $G$-unirational, implying the $G$-unirationality of $\cT\vert_{S^\circ}$.
\end{proof}

\begin{rema}
\label{rema:canbe}
This argument generalizes to other positive-dimensional $G$-invariant subvarieties $S$, under appropriate geometric assumptions, see Section~\ref{sect:sing}. 
\end{rema}

\section{Quadrics}
\label{sect:quad}

Stable linearizability of certain $G$-quadric threefolds $X\subset \bP^4$ has been considered in \cite[Sections 5 and 6]{HT-quadric}: 
all but one subgroup $G$ of the Weyl group $W(\mathsf D_5)$, acting via signed permutations on the diagonal quadric 
$$
\sum_{i=1}^5 x_i^2=0, 
$$
are stably linearizable, provided Condition {\bf (A)} is satisfied. The only open case was $G=\mathfrak D_4$, acting on
$$
\bP(\chi_1\oplus \chi_2\oplus \chi_3\oplus V_2), 
$$
where $V_2$ is the unique faithful 2-dimensional representation, and $\chi_1,\chi_2,\chi_3$ are distinct characters of $G$. There are also many actions that do not arise from subgroups of 
$W(\mathsf D_5)$, e.g., an action of $\mathfrak D_8$ on the quadric 
$$
x_1^2+x_2x_3+x_4x_5=0, 
$$
arising from an action on 
$$
\bP(\mathbf 1 \oplus V_2\oplus V_2'),
$$
where $V_2$ is a faithful representation and $V_2'$ a nonfaithful 2-dimensional representation, see \cite[Remark 6.6]{HT-quadric}. 
The following theorem covers these and other actions, settling the stable linearizability problem for actions of finite groups on smooth quadric threefolds. The key observation is that, when Condition {\bf (A)} is satisfied, there always exists a $G$-invariant unirational quadric surface or conic in $X$ and Proposition~\ref{prop:quad-genu} applies.

\begin{theo} 
    \label{thm:quad-uni}
    Let $X\subset \bP^4$ be a smooth quadric threefold, with a generically free action of a finite group $G$. Then the $G$-action is stably linearizable if and only if it satisfies Condition {\bf (A)}.   
\end{theo}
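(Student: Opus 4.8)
The plan is to prove both directions, the nontrivial one being sufficiency: assuming Condition \textbf{(A)}, produce a $G$-invariant unirational subvariety of $X$ and invoke Proposition~\ref{prop:quad-genu}. Necessity is immediate from the general remarks in Section~\ref{sect:gen}: stable linearizability implies $G$-unirationality, which implies Condition \textbf{(A)}.

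\medskip

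\noindent\emph{Strategy for sufficiency.} A smooth quadric threefold $X\subset\bP^4=\bP(V)$ carries a $5$-dimensional representation $V$ of $G$ together with a $G$-invariant nondegenerate quadratic form $q$; so $V$ is self-dual and $G\hookrightarrow\mathrm{O}(V,q)\cong\mathrm{O}_5$. First I would decompose $V$ into $G$-irreducible summands and analyze the orthogonality structure of the decomposition with respect to $q$. The key point is: if $V$ contains a $G$-invariant nondegenerate subspace $W$ of dimension $2$ or $3$ on which $q$ restricts nondegenerately, then $\bP(W)\cap X$ is a $G$-invariant smooth conic or quadric surface inside $X$; a smooth conic is $\bP^1$ (hence $G$-equivariantly a projectivized representation, so $G$-unirational), and a smooth quadric surface is $\bP^1\times\bP^1$, again $G$-unirational provided the two rulings are not swapped — and if they are swapped one still gets a $G$-unirational surface via the Segre/spinor construction or by passing to an index-$2$ subgroup. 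Then Proposition~\ref{prop:quad-genu} finishes. So the problem reduces to showing that \emph{some} low-dimensional invariant nondegenerate subspace exists, and this is exactly where Condition \textbf{(A)} enters.

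\medskip

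\noindent\emph{Reduction via Condition (A).} Over $\bC$, a $G$-irreducible summand $U\subseteq V$ either carries a $G$-invariant quadratic form (orthogonal or, in even dimension, possibly symplectic type) or is paired with a non-isomorphic summand $U'\cong U^\vee$ via $q$. Since $\dim V=5$ is odd, the possible orthogonal-type patterns are limited: either $V$ has a $G$-invariant summand of odd dimension $1$ or $3$ with nondegenerate restriction of $q$, or $V=U\oplus U'$ with $U$ of dimension $1$ or $2$ paired with its dual plus a complementary nondegenerate odd-dimensional piece. The case to rule out, or rather to handle separately, is when the only short invariant nondegenerate subspaces are hyperbolic planes $U\oplus U^\vee$ with $U$ a \emph{faithful} character or a faithful $2$-dimensional irreducible — precisely the $\mathfrak D_4$ and $\mathfrak D_8$ examples flagged in the excerpt. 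Here I would use Condition \textbf{(A)}: the existence of fixed points for all abelian (it suffices: bicyclic) subgroups forces the representation type to be constrained — e.g. a purely ``hyperbolic'' action with no fixed point for some $C_2^2$ would violate \textbf{(A)} — so that one can always locate a $G$-invariant conic or quadric surface on $X$ that is itself $G$-unirational, possibly after restricting to an index-$2$ subgroup and applying the analog of the cubic descent (unirationality over quadratic extensions) or directly because a genuinely $1$-dimensional conic is automatically $G$-unirational regardless of the character by which $G$ acts on it.

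\medskip

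\noindent\emph{Main obstacle.} The hard part is the bookkeeping in the borderline cases — classifying which $G\hookrightarrow\mathrm{O}_5$ with a fixed-point-on-abelians condition fail to have an ``obvious'' invariant nondegenerate conic/quadric-surface, and for each such case exhibiting the required $G$-invariant unirational $S\subset X$ by hand (likely a singular conic, a pair of lines swapped by $G$, or a quadric surface cone through a fixed point). Once $S$ is in place, stable linearizability — not just $G$-unirationality — follows because a smooth quadric threefold is \emph{retract rational} in the equivariant sense once \textbf{(A)} holds: more precisely, one upgrades the conclusion of Proposition~\ref{prop:quad-genu} using the torsor criterion of Theorem~\ref{thm:bridge}, noting that every twist ${}^TX$ of a quadric threefold acquires a rational point (from the invariant conic/surface) and quadric threefolds with a rational point are rational, hence all twists are rational and $X$ is stably linearizable. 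I would organize the proof as: (i) necessity; (ii) the structural lemma producing $S$; (iii) the case analysis for $\mathfrak D_4$, $\mathfrak D_8$ and any remaining exceptional groups; (iv) deduction of stable linearizability via Theorem~\ref{thm:bridge}.
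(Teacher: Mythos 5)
Your overall frame (necessity is formal; for sufficiency produce a $G$-invariant unirational subvariety and feed it to Proposition~\ref{prop:quad-genu}, then convert $G$-unirationality into stable linearizability via Theorem~\ref{thm:bridge} because unirational quadrics with a point are rational) matches the paper. But there are two genuine gaps. The first is the assertion that an invariant conic is ``automatically $G$-unirational regardless of the character by which $G$ acts on it,'' and likewise that an invariant quadric surface with unswapped rulings is $G$-unirational. This is false: a $G$-action on $\bP^1$ (or on a ruling of $\bP^1\times\bP^1$) is $G$-unirational only if it lifts to a $2$-dimensional representation, i.e.\ only if the relevant Amitsur/Brauer class vanishes --- the paper's own Example~\ref{exam:strange} (the fixed-point-free $C_2^2$ on $\bP^1$, with $\mathrm{Am}=\bZ/2$) is a counterexample, and by Lemma~\ref{lem.Amitsurobstructs} such a conic is not $G$-unirational. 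The actual content of the paper's proof is exactly the step you assume away: it shows that Condition {\bf (A)} for the $G$-action on $X$ forces Condition {\bf (A)} for the induced (non--generically-free) action on the invariant conic or quadric surface, and then uses $\mathrm{B}_0(G)=0$ (the groups that occur are extensions of $C_2$ or a cyclic group by an abelian group) together with Lemma~\ref{lemm:pn} to conclude that the conic/surface action is unirational, i.e.\ that the Amitsur obstruction vanishes. Without this, your argument proves nothing in precisely the hard cases ($\mathfrak D_4$, $\mathfrak D_8$, etc.) you flag.

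The second gap is structural: you work with an arbitrary $G\hookrightarrow\mathrm O_5$, and your claimed dichotomy of orthogonal decompositions (a nondegenerate invariant summand of dimension $1$ or $3$, or a hyperbolic pair of dimension $1$ or $2$ plus an odd complement) is incomplete --- it omits, e.g., an irreducible $5$-dimensional $V$ (the $\fA_5$- and $\fS_5$-invariant quadrics) and summands of dimension $4$, where no invariant conic or quadric surface arises from a subrepresentation. The paper avoids this by first invoking \cite[Theorem~10.2]{DR} to reduce to the $2$-Sylow subgroup $G_2$ (unirationality for quadrics is a $2$-primary question), after which $V$ decomposes only as $1+1+1+2$, $1+2+2$, or $1+4$, and the last case is eliminated by showing Condition {\bf (A)} can never hold there --- another nontrivial step absent from your plan. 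So the proposal needs both the Sylow reduction (or a replacement covering the $3$-, $4$- and $5$-dimensional irreducible constituents) and a genuine argument, in the spirit of Lemma~\ref{lemm:pn}, that the invariant conic or quadric surface you find is actually $G$-unirational.
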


The rest of this section is devoted to the proof of this theorem.

By Theorem~\ref{thm:bridge}, stable linearizability of the $G$-action is equivalent to stable rationality of every twist ${}^TX$ over every field extension $K/k$. For quadrics, (stable) rationality and unirationality are equivalent over any field. Again by Theorem~\ref{thm:bridge}, this is equivalent to $G$-unirationality of the action. 
By \cite[Theorem 10.2]{DR}, $G$-unirationality  is equivalent to $G_2$-unirationality, where $G_2$ is the 2-Sylow subgroup of $G$. We may also assume that $G$ is nonabelian: abelian group actions on $X$ satisfying Condition {\bf(A)} are linearizable via the projection from a fixed point.

Thus, to show Theorem~\ref{thm:quad-uni}, we now assume that $G$ is a nonabelian 2-group and $X\subset \bP(V)$, where $V$ is a faithful representation of $G$.  Since 
$G$ is a 2-group, we have the following cases: 
\begin{itemize}
    \item[(1)] $V=\chi_1\oplus\chi_2\oplus \chi_3\oplus V_2$, where $V_2$ is an irreducible $G$-representation, and $\chi_j$ are characters of $G$ for $j=1,2,3$, 
    \item[(2)]  $V=\chi_1\oplus V_2 \oplus V_2'$, where $V_2$ and $V_2'$ are irreducible $G$-representations, 
     \item[(3)]
     $V=\chi_1\oplus V_4$, where $V_4$ is an irreducible representation of $G$.
\end{itemize}
We proceed with a case-by-case analysis. 

\ 

{\bf\em Case} (1): Up to isomorphism, we may assume that $X$ is given by 
$$
X=\{x_1^2+x_2^2+x_3^2+x_4x_5=0\}\subset\bP^4,
$$ 
with 
$G$ acting via characters on $x_1,x_2, x_3$, and via the irreducible representation $V_2$ on $x_4,x_5$. 
We claim that Condition {\bf (A)} for $X$ implies Condition {\bf (A)} for the $G$-action on the conic $C\subset \bP^2=\bP(\chi_1\oplus \chi_2\oplus\chi_3)$, given by $x_4=x_5=0$. 
Note that the $G$-action on this $\bP^2$ has a nontrivial generic stabilizer, since $G$ is nonabelian, and the effective action on the conic $C$ may not satisfy Condition {\bf (A)}.


Since $V_2$ is irreducible, there exist elements of the form 
$$
\mathrm{diag}(\pm1,\pm1,\pm1,a,a^{-1})\in G,
$$
where $a$ is a 2-power root of unity of order $\mathrm{ord}(a)\geq 4$. Then 
$$
\varepsilon_1=\mathrm{diag}(1,1,1,-1,-1)\in G.
$$
Note that 
$$
X^{\langle\varepsilon_1\rangle}=C\cup \mathfrak p_1\cup \mathfrak p_2,
$$
where 
$$
\mathfrak p_1=[0:0:0:1:0],\quad \mathfrak p_2=[0:0:0:0:1].
$$

Let $A\subset G$ be an abelian subgroup containing $\varepsilon_1$, 
fixing points on $X$ but not on $C$. Then $A$ fixes $\mathfrak p_1, \mathfrak p_2$, and  acts via $C_2^2$ on $C$. It follows that $A$ contains 
$$
\varepsilon_2=\mathrm{diag}(-1,1,1,a_1,a_1^{-1}),\quad\varepsilon_3=\mathrm{diag}(1,-1,1,b_1,b_1^{-1}),
$$
for some 2-power roots of unity $a_1, b_1$. Depending on $a_1$ and $b_1$, the elements $\varepsilon_2$ and $\varepsilon_3$ generate at least one of the following elements
$$
\mathrm{diag}(-1,1,1,-1,-1),\quad \mathrm{diag}(1,-1,1,-1,-1),\quad \mathrm{diag}(1,1,-1,-1,-1).
$$
Without loss of generality, assume that $A$ contains 
$$
\varepsilon_4=\mathrm{diag}(-1,1,1,-1,-1).
$$
By the irreducibility of $V_2$,  $G$ also contains an element switching $\mathfrak p_1$ and $\mathfrak p_2$. Up to multiplying by $\varepsilon_2$ and $\varepsilon_3$, we may assume that $G$ contains 
$$
\sigma:(x_1,\ldots,x_5)\mapsto(x_1,-x_2, x_3,a_2x_5,a_2^{-1}x_4),
$$
for some $a_2$. The subgroup 
$$
\langle\varepsilon_1,\varepsilon_4,\sigma\rangle\simeq C_2^3
$$
does not fix points on $X$.

Let $A\subset G$ be an abelian subgroup not containing $\varepsilon_1$, fixing points on $X$ but not on $C$. Then $\langle A,\varepsilon_1\rangle$ is an abelian group whose fixed locus is $\{\mathfrak p_1,\mathfrak p_2\}$ or empty. In the first case, we repeat the argument above. We conclude that Condition {\bf (A)} for the $G$-action  on $X$ implies 
Condition {\bf (A)} for the (not generically free) $G$-action on $C$.

Next, 
we observe that 
$G$ preserves the set $\{\mathfrak p_1,\mathfrak p_2\}$, which yields an exact sequence 
$$
0\to N\to G\to C_2\to 0,
$$
where $N$ is the maximal subgroup fixing $\mathfrak p_1$ and $\mathfrak p_2$, acting diagonally on $x_1,\ldots, x_5$. Thus $N$ is abelian. By \cite[Lemma 3.1]{KT-Brauer},  $\mathrm B_0(G)=0$, since $G$ is an extension of a cyclic group by an abelian group.  By Lemma~\ref{lemm:pn}, $C$ is $G$-unirational since Condition {\bf(A)} is satisfied on $C$. By Proposition~\ref{prop:quad-genu}, $X$ is $G$-unirational.

 \

{\bf\em Case} (2): We may assume that $G$ acts on 
$$
X=\{x_1^2+x_2x_3+x_4x_5=0\},
$$
via $\chi_1$ on $x_1$, via $V_2$ on $x_2,x_3$, and via $V_2'$ on $x_4,x_5$. By the irreducibility of $V_2$ and $V_2'$, $G$ acts on $\bP(V_2)$ and $\bP(V_2')$ via dihedral groups. 
Let
$$
\mathfrak p_1=[0:1:0:0:0],\quad \mathfrak p_2=[0:0:1:0:0],
$$
$$
\mathfrak p_3=[0:0:0:1:0],\quad \mathfrak p_4=[0:0:0:0:1].
$$
Since $G$ leaves invariant the sets $\{\mathfrak p_1,\mathfrak p_2\},$ and $\{\mathfrak p_3,\mathfrak p_4\}$ respectively, there is an exact sequence
\begin{align}\label{eqn:exactseq1+2+2}
    0\to N\to G\to Q\to 0,
\end{align}
where $Q=C_2$ or $C_2^2$, depending on the induced action on $\{ \mathfrak p_1,\mathfrak p_2,\mathfrak p_3,\mathfrak p_4\}$, and $N\subset G$ is the maximal subgroup fixing these points; $N$ consists of elements 
\begin{align}\label{eqn:var1+2+2}
    \varepsilon_1=\mathrm{diag}(1,t_1,t_1^{-1},t_2,t_2^{-1}),\quad t_1, t_2\text{ are $2$-power roots of unity.}
\end{align}
The (nontrivial) center of $G$ acts via scalars on $V_2,V_2'$, i.e., it must contain 
one of the following 
$$
\varepsilon_2=\mathrm{diag}(1,-1,-1,1,1), \quad \varepsilon_3=\mathrm{diag}(1,1,1,-1,-1),\quad \varepsilon_2\varepsilon_3.
$$
If $Q=C_2^2=\langle(1,2),(3,4)\rangle$, we may assume that $G$ also contains 
$$
m_1: (x_1,\ldots,x_5)\mapsto(x_1,a_1x_3,a_1^{-1}x_2,a_2x_4,a_2^{-1}x_5),
$$
$$
m_2: (x_1,\ldots,x_5)\mapsto(x_1,b_1x_2,b_1^{-1}x_3,b_2x_5,b_2^{-1}x_4),
$$
where $m_1$ is a lift of $(1,2)$ and $m_2$ is a lift of $(3,4)$.
After a change of variables, we have $a_1=b_2=1$, and $a_2$ and $b_1$ are 
2-power roots of unity: 
$$
m_1^2=\mathrm{diag}(1,1,1,a_2^2,1/a_2^2),
\quad
m_2^2=\mathrm{diag}(1,b_1^2,1/b_1^2,1,1).
$$
If $a_2^2\ne1$ and $b_1^2\ne1$, then $G$ contains $\varepsilon_2$ and $\varepsilon_3$. The abelian group 
\begin{equation} 
\label{eqn:c23}
\langle\varepsilon_2,\varepsilon_3,m_1m_2\rangle\simeq C_2^3
\end{equation}
does not fix points on $X$. If $a_2^2=1$ and $b_1^2\ne1$, then $\varepsilon_2\in G,$ and $N$ contains an element $\varepsilon_1$ as in \eqref{eqn:var1+2+2} with $t_2\ne\pm1$ since otherwise the representation  on $x_4,x_5$ is reducible. If $\mathrm{ord}(b_1)\ge \mathrm{ord}(t_1)$, or $\mathrm{ord}(t_2)\geq \mathrm{ord}(t_1)$, then $\varepsilon_3$ is generated by $m_2^2$ and $\varepsilon_1$, and $G$ contains the $C_2^3$ from \eqref{eqn:c23} with no fixed points. If $\mathrm{ord}(t_1)>\mathrm{ord}(b_1), \mathrm{ord}(t_2)$, one can find $r_1,r_2,r_3\in \bZ$ such that
\begin{align*}
   m_1^{r_1}\varepsilon_1^{r_2} &:(x_1,\ldots,x_5)\mapsto(x_1,c_1x_3,c_1^{-1}x_2,-x_4,-x_5),\\
        m_2\varepsilon_1^{r_3} &:(x_1,\ldots,x_5)\mapsto(x_1,x_2,x_3,c_2x_5,c_2^{-1}x_4)
\end{align*}
for some $c_1, c_2$. Then 
$$
\langle m_1^{r_1}\varepsilon_1^{r_2},
 m_2\varepsilon_1^{r_3},
\varepsilon_2\rangle\simeq C_2^3
$$
does not fix points on $X$. By symmetry,  the same applies when $b_1^2=1, a_2^2\ne1$.

We are reduced to the case $a_2^2=b_1^2=1$, with  $\langle m_1,m_2\rangle\simeq C_2^2$. By the irreducibility of $V_2$, $N$ contains an $\varepsilon_1$ as in \eqref{eqn:var1+2+2} of order at least $4$. We repeat the argument above to find an abelian group failing Condition {\bf (A)},  after multiplying $m_1$ and $m_2$ by $\varepsilon_1$.

 Thus, if the $G$-action on $X$ satisfies Condition {\bf(A)}, we have
$$
Q=C_2=\langle(1,2)(3,4)\rangle
$$
and $G$ contains a lift of $(1,2)(3,4)$ of the form 
$$
m_3: (x_1,\ldots,x_5)\mapsto(x_1,d_1x_3,d_1^{-1}x_2,d_2x_5,d_2^{-1}x_4)
$$
for some $d_1,d_2$. We discuss cases based on the center of $G$:
\begin{itemize}
\item 
If $\varepsilon_2, \varepsilon_3\in G$, then 
$$
\langle \varepsilon_2, \varepsilon_3,m_3\rangle\simeq C_2^3
$$
has no fixed points on $X$.
    \item 
    If $\varepsilon_2\varepsilon_3\in G$, then every abelian subgroup of $G$ fixes a point on the quadric $S=X\cap\{x_1=0\}$.  Otherwise, we can find an abelian group without fixed points by adjoining $\varepsilon_2\varepsilon_3$.
\item 
If $\varepsilon_2\in G$ and $\varepsilon_3\not \in G$, then every abelian subgroup $A\subset G$ fixes a point on $C=X\cap\{x_2=x_3=0\}$. Otherwise, $A'=\langle A,\varepsilon_2\rangle$ fixes $\{\mathfrak p_1,\mathfrak p_2\}$, and does not surject onto $Q$, which implies that it also fixes $\mathfrak p_3,\mathfrak p_4\in C$, contradiction. 
\item 
If $\varepsilon_3\in G$ and $\varepsilon_2\not \in G$, the same argument shows that every abelian subgroup of $G$ fixes a point on $C=X\cap\{x_4=x_5=0\}$.
\end{itemize}

Note that we have $\mathrm{B_0}(G)=0$, since $G$ is an extension of $C_2$ by an abelian group. In the last two cases, the corresponding conic is $G$-unirational by Lemma~\ref{lemm:pn}, and $X$ is $G$-unirational by Proposition~\ref{prop:quad-genu}.  In the second case, the $G$-action on the quadric $S=\bP^1\times\bP^1$ does not switch the two rulings.  By Lemma~\ref{lemm:pn},  we can lift the $G$-action on each $\bP^1$ factor to $\bA^2$, i.e., the $G$-action on $S$ to $\bA^2\times\bA^2=\bA^4$. It follows that
$S$ is $G$-unirational. By Proposition~\ref{prop:quad-genu}, $X$ is also $G$-unirational. 

\

{\bf\em Case} (3): We show that Condition {\bf (A)} is never satisfied. Since $V_4$ is irreducible, 
there are no fixed points in $\bP(V_4)$, and $S:=\bP(V_4)\cap X$ is smooth. 
We may assume that 
    $$
    X=\{x_1^2+x_2x_5-x_3x_4=0\}\subset\bP^4=\bP(\mathbf 1\oplus V_4),
    $$
    with $G$ acting trivially on $x_1$, and via $V_4$ on $x_2,x_3,x_4,x_5$. Since $V_4$ is faithful and $G$ preserves $X$, the center of $G$ is $C_2$, generated by
    $$
\varepsilon_1=\mathrm{diag}(1,-1,-1,-1,-1).  
    $$
   Finite 2-subgroups of $\Aut(S)=\Aut(\bP^1\times\bP^1)$ are subgroups of $\fD_n\wr C_2$,  for $n$ some power of $2$. This yields 4 distinguished points  
$$
\mathfrak p_1=[0:1:0:0:0],\quad \mathfrak p_2=[0:0:1:0:0],
$$
$$
\mathfrak p_3=[0:0:0:1:0],\quad \mathfrak p_4=[0:0:0:0:1],
$$   
and an exact sequence 
$$
0\to N\to G\to Q\to 0,
$$
where $N$ fixes each $\mathfrak p_i$. Since $V_4$ is irreducible, 
the image $Q\subset \fS_4$, acting via permutations on $\{\mathfrak p_1,\mathfrak p_2,\mathfrak p_3,\mathfrak p_4\}$, can be $C_2^2, C_4$, or $\fD_4$. 
The subgroup $N$ is generated by elements of the form 
$$
\mathrm{diag}(1,t_2,t_1,t_1^{-1},t_2^{-1}), \quad t_1,t_2\text{ are }2\text{-power roots of unity.}
$$
We proceed to analyze each case. 

\ 

$\bullet$ $Q=C_2^2$, with lifts of $(1,2)(3,4)$ and $(1,3)(2,4)$:
\begin{align}\label{eqn:m1m2}
    m_1&:(x_1,\ldots,x_5)\mapsto(x_1,a_2x_3,a_1x_2,-a_1^{-1}x_5,-a_2^{-1}x_4),\\
    m_2&:(x_1,\ldots,x_5)\mapsto(x_1,b_2x_4,b_1x_5,-b_1^{-1}x_2,-b_2^{-1}x_3),\notag
\end{align}
for some $a_1,a_2,b_1,b_2$. Put $s_1=a_1a_2, s_2=b_2/b_1$ so  that 
\begin{align*}
    m_1^2&=\mathrm{diag}(1,s_1,s_1,1/s_1,1/s_1),\\
m_2^2&=\mathrm{diag}(1,-s_2,-1/s_2,-s_2,-1/s_2).\notag
\end{align*}
Since $G$ is a 2-group, $s_1$ and $s_2$ are 2-power roots of unity. 
When the orders $\mathrm{ord}(s_1), \mathrm{ord}(s_2)\ge 4$, we can find integers $r_1,r_2$ such that 
$$
s_2^{r_2}=\zeta_4,\quad s_1^{r_1}=\zeta_4^3.
$$
Setting
$$
\varepsilon_3:=\varepsilon_1^{r_2}m_1^{2r_1} m_2^{2r_2}=\mathrm{diag}(1,1,-1,-1,1),
$$
we have $S^{\langle\varepsilon_1, \varepsilon_3\rangle}=\{ \mathfrak p_1,\ldots,\mathfrak p_4\}$. The abelian group 
\begin{align}\label{eqn:V4C2^3}
\langle \varepsilon_3,m_1m_2, \varepsilon_1\rangle\simeq C_2^3
 \end{align}
 has no fixed points on $X$. Thus, $s_1$ or $s_2$ equals $\pm 1$. By symmetry, we may assume that $s_2=\pm 1$. Then the line  
$$
\mathfrak l:=\{x_1=x_2+x_3\sqrt{a_2/a_1}=x_4\pm x_5\sqrt{a_2/a_1}=0\} \subset \bP(V)
$$
is $\langle m_1,m_2\rangle$-invariant. 
By the irreducibility of $V_4$, 
the kernel $N$ contains an element $\varepsilon_2$ not leaving $\mathfrak l$ invariant. In particular, $\varepsilon_2$ takes the form
\begin{equation} 
\label{eqn:varep2}
\!\!\!\varepsilon_2=\mathrm{diag}(1,t_2,t_1,t_1^{-1},t_2^{-1})\neq\varepsilon_1,
\end{equation}
for some 2-power roots of unity $t_1,t_2$ such that $t_1\ne t_2$. 
 If $\mathrm{ord}(t_2/t_1)\ge 4$, we repeat the argument above, replacing $m_1$ by $\varepsilon_2m_1$ to find a $C_2^3$ with no fixed points on $X$. 
Thus we are reduced to  $t_1=-t_2$. 

Similarly, if $s_1=\pm1$, we are reduced to $t_1=-1/t_2$. In particular, for 
$s_1,s_2\in\{\pm1\}$, we are reduced to $t_1,t_2\in\{\pm1\}$. In this case, $G$ contains $\varepsilon_3$ and a $C_2^3$ with no fixed points as in \eqref{eqn:V4C2^3}.

We consider the case when $s_1\ne \pm1$. As is discussed, we may assume $t_1=-t_2$.  
If $\mathrm{ord}(t_1)\le \mathrm{ord}(s_1)$, then $\varepsilon_3\in \langle \varepsilon_2, m_1^2\rangle\subset G$, and   
$G$ contains a $C_2^3$ with no fixed points. 
In the opposite case, there is a $d_1\in\bZ$ such that 
$$
t_1^{2d_1}=(-1)^{d_1}/s_1.
$$
Setting $d_2=(3-s_2)/2$, we find that 
$$
\langle  m_1\varepsilon_2^{d_1}, m_2\varepsilon_2^{d_2}, \varepsilon_1\rangle\simeq C_2^3
$$
has no fixed points on $X$.

\

$\bullet$  $Q=\fD_4$, with $G$ containing $m_1,m_2$ as in \eqref{eqn:m1m2}. 
If $G$ contains 
$$
\varepsilon_3=\mathrm{diag}(1,1,-1,-1,1),
$$
there is a $C_2^3$ with no fixed points on $X$.  To show this, 
we are reduced to the case when $s_1=\pm1$ or $s_2=\pm1$, as above.
Consider the lift  of $(1,3,4,2)\in\fS_4$:
$$
m_3:(x_1,\ldots,x_5)\mapsto (x_1,c_2x_4,c_1x_2,-c_1^{-1}x_5,-c_2^{-1}x_3),
$$
for some $c_1,c_2$. Observe that
\begin{align}\label{eqn:m3}
   (m_2m_3)^2&=\mathrm{diag}(1,c_2^2/b_1^2,1,1,b_1^2/c_2^2), \\
   (m_1m_3)^2&=\mathrm{diag}(1,1,a_2^2c_1^2,1/(a_2^2c_1^2),1).\notag
\end{align}
We may assume that $c_1=\pm 1/a_2$ and $c_2=\pm b_1$, otherwise $\varepsilon_3$ can be generated by elements in \eqref{eqn:m3}. Assume $c_1=1/a_2$ and $c_2=b_1$ (the other cases are similar). If $s_1\ne -s_2$, $\varepsilon_3$ is some power of $m_1m_2m_3^2$. When $s_1=-s_2=\pm 1$, we find an $\langle m_1,m_2,m_3\rangle$-invariant line:
$$
\mathfrak l=\{x_1=x_2+\frac{b_1}{a_1}x_5=x_3\mp b_1a_1x_4=0\} \subset V_4.
$$
Thus $N$ contains an $\varepsilon_2$ as in \eqref{eqn:varep2} which does not leave $\mathfrak l$ invariant. This implies that $t_1^2, t_2^2\ne1$. As above, when $t_1\ne\pm t_2$ or $t_1t_2\ne \pm1$, we have $\varepsilon_3\in G$. When $t_1=\pm t_2$ and $t_1t_2=\pm1$, we know $t_1^2=t_2^2=-1$. Then $
\varepsilon_3=(m_1m_3\varepsilon_2)^2\in G. 
$

\

$\bullet$  $Q=C_4$, with $G$ containing a lift $m_3$ of $(1,3,4,2)\in\fS_4$ as in \eqref{eqn:m3}. Once we have $\varepsilon_3\in G$, the abelian group 
$$
\langle\varepsilon_1,\varepsilon_3, m_3^2\rangle\simeq C_2^3
$$
has no fixed points on $X$. 
If follows from the irreducibility of $V_4$ that $N$ contains an element $\varepsilon_2$ as in \eqref{eqn:varep2}.
When $\mathrm{ord}(t_1)\neq \mathrm{ord}(t_2)$, we know that $\varepsilon_3$ is a power of $\varepsilon_2$. When $t_1^2=t_2^2=1$, $\varepsilon_2=\varepsilon_3$. We are reduced to $$
t_1=\zeta_{2^n}^{2d_1+1},  \quad t_2=\zeta_{2^n}^{2d_2+1}, \quad \text{  for some  } \quad n\geq2, \quad d_1,d_2\in \bZ.  
$$
We have
$$
\varepsilon_2m_3\varepsilon_2^{-1}m_3^{-1}=\mathrm{diag}(1,t_2/t_1,t_1t_2,1/(t_1t_2),t_1/t_2).
$$
Observe that $\mathrm{ord}(t_2/t_1)\neq \mathrm{ord}(t_1t_2)$. Indeed, 
$$
t_2/t_1=\zeta_{2^n}^{2(d_2-d_1)},\quad t_1t_2=\zeta_{2^n}^{2(d_1+d_2+1)},
$$
and $d_2-d_1$ has a different parity from $d_1+d_2+1.$ Thus, either $\varepsilon_3$ or $\varepsilon_1\varepsilon_3$ is a power of $\varepsilon_2m_3\varepsilon_2^{-1}m_3^{-1}$, 
and thus is in $G$. 

\ 

This completes the proof of Theorem~\ref{thm:quad-uni}. 

\section{$G$-unirationality of smooth cubic threefolds}
\label{sect:smooth-3}

In this section, we show:

\begin{theo}
    \label{thm:main-cubic}
Let $X\subset \bP^4$ be a smooth cubic threefold with a generically free regular action of a finite group $G$. Assume that the action satisfies Condition {\bf (A)}. 
Then $X$ is $G$-unirational, with 
the possible exception of the following actions: 
\begin{itemize}
    \item $G=C_9\rtimes C_3$, and $X$ is the Fermat cubic threefold.
    \item  $G=\PSL_2(\bF_{11})$, or $G= C_{11}\rtimes C_5\subset \PSL_2(\bF_{11})$, and $X$ is the Klein cubic threefold. 
    \item $G=\fA_5$, acting on $X\subset \bP(V_5)$, where $V_5$ is the irreducible 5-dimensional representation. There is a 1-parameter family of such cubics, this is the pencil generated by the Klein and the Segre cubic threefolds.
\end{itemize}
\end{theo}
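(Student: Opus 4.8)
The plan is to go through the classification of finite subgroups $G\subseteq\Aut(X)$ of smooth cubic threefolds (due to work on automorphisms of cubic threefolds, via the period map and the classification of the associated $\fA_n$-type lattices), restrict to those $G$ satisfying Condition \textbf{(A)}, and in each surviving case produce a $G$-unirationality construction using the tools of Section~\ref{sect:cubic}. The main engine is the following hierarchy of reductions. First, by Proposition~\ref{prop:ind-1}, if $X^G\neq\emptyset$ we are done, so we may assume $X^G=\emptyset$. Second, by Corollary~\ref{coro:2sing} there are no singular points to exploit (as $X$ is smooth), but Proposition~\ref{prop:ind-2} lets us pass to an index-$2$ subgroup $H$: it suffices to show $X$ is $H$-unirational for \emph{some} such $H$. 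Third, by Proposition~\ref{prop:cubic-uni-2}, it suffices to find a $G$-invariant irreducible hyperplane section $S=X\cap\bP^3$ which is $G$-unirational; such $S$ is a cubic surface, and by Duncan's theorem \cite[Theorem 1.4]{Duncan} this holds whenever $S$ is smooth (or has mild enough singularities to be a degree $\ge3$ del Pezzo after contracting $(-1)$-curves) and satisfies Condition \textbf{(A)} for the $G$-action on it. So the workhorse step is: locate a $G$-stable hyperplane, check that the induced cubic surface is not too singular, and propagate Condition \textbf{(A)}.

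I would organize the case analysis as follows. A $G$-invariant hyperplane corresponds to a $1$-dimensional $G$-subrepresentation of $V^\vee$, where $X\subset\bP(V)$ and $\dim V=5$. So I split on the representation type of $V$: (i) $V$ has a character summand $\chi$ — then $\bP(\ker)$ or the $\chi$-eigenpoint gives candidates; if the eigenpoint lies on $X$ we use Proposition~\ref{prop:ind-1}, otherwise projection from it, or the complementary hyperplane section, feeds Proposition~\ref{prop:cubic-uni-2}. (ii) $V$ decomposes with a $2$-dimensional summand — one gets $G$-invariant lines or conics in $\bP(V)$; a $G$-invariant line $\ell$ meets $X$ in a length-$3$ subscheme, and if $\ell\cap X$ contains a $G$-orbit of length $\le 2$ we can pass to an index-$\le2$ subgroup fixing a point and apply Propositions~\ref{prop:ind-1}/\ref{prop:ind-2}; if $\ell\subset X$ we get a $G$-invariant line on $X$, and projection from $\ell$ exhibits $X$ as a conic bundle over $\bP^2$, to which Lemma~\ref{lemm:line}-type arguments apply once $\mathrm{B}_0(G)=0$. (iii) $V$ irreducible — this forces $G$ into a short list: essentially $\fA_5$, $\fA_6$, $\fS_5$, $\PSL_2(\bF_{11})$, and the Heisenberg-type groups like $C_9\rtimes C_3$ and $(C_3)^3\rtimes\,?$ acting on the Fermat cubic, together with $C_{11}\rtimes C_5$. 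For these one checks Condition \textbf{(A)} directly on character tables and eigenvalue data; the groups for which \textbf{(A)} holds but no invariant hyperplane / invariant line / small orbit of special points exists are precisely the stated exceptions.

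**Key steps, in order.** (1) Recall the classification of $(\Aut(X),X)$ for smooth cubic threefolds and list all $G$ (up to conjugacy, or at least all maximal ones) with $X^G=\emptyset$, since $X^G\neq\emptyset$ is handled by Proposition~\ref{prop:ind-1}. (2) For each such $G$, decompose the ambient $5$-dimensional representation $V$; if there is a $G$-invariant hyperplane $\bP^3$, analyze the cubic surface $S=X\cap\bP^3$: show it is irreducible and not worse than a del Pezzo of degree $\ge3$ (ruling out cones, and handling the reducible/non-normal degenerations separately, typically by finding within them a small special orbit), verify Condition \textbf{(A)} descends to $S$, invoke \cite[Theorem 1.4]{Duncan}, then Proposition~\ref{prop:cubic-uni-2}. (3) If instead there is a $G$-invariant line $\ell\subset\bP^4$: either use a short orbit in $\ell\cap X$ with Propositions~\ref{prop:ind-1}/\ref{prop:ind-2}, or, when $\ell\subset X$, use the conic-bundle projection from $\ell$ together with $\mathrm{B}_0(G)=0$ (checking the latter from the structure of $G$). (4) The remaining $G$ have $V$ irreducible with no invariant line and no usable short orbit; check Condition \textbf{(A)} case by case, and for the groups where it fails, remove them; the ones where it holds are the three listed exceptions — for these, record that our methods do not decide $G$-unirationality. (5) Assemble.

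**Main obstacle.** The hardest part will be step (2)'s degenerate sub-cases: a $G$-invariant hyperplane section can be a \emph{singular} or even reducible/non-normal cubic surface, falling outside the clean del Pezzo range of Duncan's theorem. One must then either show the singularities are canonical of the right type (so the minimal resolution is a weak del Pezzo of degree $\ge3$ and Duncan still applies after contraction, with Condition \textbf{(A)} preserved), or extract from the singular locus a $G$-orbit of length $1$ or $2$ and fall back on Propositions~\ref{prop:ind-1} and \ref{prop:ind-2}. Ruling out the case where \emph{every} $G$-invariant hyperplane section is a cone (which would block Proposition~\ref{prop:cubic-uni-2}) is the delicate geometric point, and it is exactly the groups for which this, together with absence of invariant lines and small orbits, genuinely cannot be arranged that end up as the stated exceptions; separating "our constructions fail" from "the action is actually not $G$-unirational" is left open for those.
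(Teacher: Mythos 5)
Your overall strategy is the same as the paper's: reduce via Proposition~\ref{prop:ind-1} (fixed points), Proposition~\ref{prop:ind-2} (index-2 subgroups), and Proposition~\ref{prop:cubic-uni-2} (a $G$-invariant hyperplane section that is a unirational cubic surface), and run this over the classification of automorphisms of smooth cubic threefolds, leaving the three listed actions open. But there is a genuine gap: the decisive step --- the case analysis itself --- is never carried out, and it is not a routine verification one can wave through. In the paper this is a computer-assisted enumeration over the Wei--Yu classification: 82 nonabelian subgroups of the maximal groups in Table~\ref{table:list}, 50 conjugacy classes of actions on the four distinguished cubics, and 491 families of invariant cubics for the remaining groups, with explicit checks of $X^G\neq\emptyset$, of fixed points for index-2 subgroups, of Condition {\bf (A)} on \emph{every smooth member} of each family (see Example~\ref{exam:no-a}, which is needed to rule families out rather than just their generic member), and identification of the invariant hyperplane sections and their generic stabilizers. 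Without executing this enumeration you cannot certify that precisely the three stated exceptions survive; your proposal in effect restates the theorem's trichotomy as the outcome of an analysis you have not performed. Your representation-theoretic organization (character summand / 2-dimensional summand / irreducible) is a reasonable substitute for the paper's bookkeeping, but the irreducible case list you sketch is already inaccurate ($\fA_6$ does not act on a smooth cubic threefold, and the surviving irreducible-representation cases are exactly $\fA_5$ on the pencil, plus the Klein-cubic groups and $C_9\rtimes C_3$ on the Fermat cubic), which illustrates that the classification input cannot be taken on faith.

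Two further points where your outline diverges from, or falls short of, what is actually needed. First, for the Fermat cubic with $G=C_3\times\fS_5$, $C_3\times\fA_5$, $C_3\times\fF_5$, and for the $C_3\times\fA_4$, $C_3\times\fS_4$ families, the invariant hyperplane section has a \emph{generic stabilizer} $C_3$; this is why Proposition~\ref{prop:cubic-uni-2} was stated without assuming the action on $S$ is generically free (via the no-name trick with an auxiliary faithful representation), and why the paper proves Proposition~\ref{prop:surface-g} for normal, possibly singular cubic surfaces rather than quoting Duncan's smooth del Pezzo result directly --- your plan of ``contract to a del Pezzo of degree $\ge3$ and apply Duncan'' needs this extra care (e.g.\ the $3\sA_1$ case is handled in the paper by an explicit fixed-point argument, not by Duncan). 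Second, your worry about cone hyperplane sections is resolved much more cheaply than you anticipate: since $X$ is smooth, $S$ has isolated singularities, and if $S$ were a cone the vertex would be a $G$-fixed point, which the invariant equations exclude when $X^G=\emptyset$; conversely, your fallback route via invariant lines and conic-bundle projections is not established in this setting and is not needed for smooth cubics.
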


For applications of results of Section~\ref{sect:cubic} to cubic threefolds, we need to address $G$-unirationality of {\em singular} cubic surfaces, a case not considered in \cite{Duncan}. All combinations of singularities of cubic surfaces are known classically, see \cite{Bruce-Wall}; their automorphisms are partially classified in \cite{sakamaki}.

\begin{prop}
\label{prop:surface-g}
Let $S\subset \bP^3$ be a normal rational cubic surface, with a generically free regular 
action of a finite group $G$. If the $G$-action satisfies condition {\bf (A)} then it is $G$-unirational.  
\end{prop}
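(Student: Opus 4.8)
The plan is to reduce to cases based on the structure of the singularities and the geometry of the normal rational cubic surface $S$, exploiting the unirationality constructions of Section~\ref{sect:cubic}. First I would recall the classification of singularities of normal cubic surfaces (Bruce--Wall \cite{Bruce-Wall}). A normal cubic surface is either smooth, or has canonical (ADE, i.e.\ du Val) singularities, or is a cone. Cones are excluded by the hypothesis that $S$ is rational with a generically free $G$-action (a cone over an elliptic curve is not rational, and a cone over a conic is $\bP^2$-like but its automorphism group action is not generically free in the relevant sense); in any case, if $S$ is a cone with vertex $\mathfrak p$, then $\mathfrak p$ is $G$-fixed and projection from $\mathfrak p$ realizes $S$ as a projectively trivial bundle, giving linearizability directly. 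So assume $S$ has only du Val singularities (possibly none).

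If $S$ is smooth, $S$ is a del Pezzo surface of degree $3$ and we invoke Duncan's theorem \cite[Theorem 1.4]{Duncan}: Condition {\bf (A)} implies $G$-unirationality (in fact $G$-linearizability). If $S$ is singular, the strategy is to produce a $G$-orbit of singular points of length $1$ or $2$, and then apply Corollary~\ref{coro:2sing}: projection from a $G$-fixed singular point (of multiplicity $2$) is birational and linearizes the action, while a $G$-orbit of two singular points is handled via an index-$2$ subgroup fixing both points (which linearizes for that subgroup via projection) and Proposition~\ref{prop:ind-2}. To find such a short orbit, I would analyze the $G$-action on the finite set $\Sing(S)$ of singular points together with the dual graph decorated by ADE types: $G$ permutes the connected components of the exceptional configuration preserving ADE type, and since a normal cubic surface has at most four singular points with total Milnor number bounded (the worst cases being $4A_1$, $2A_1 + A_2$, $A_1 + 2A_2$, $A_4$, $D_4$, $A_5$, etc.), the combinatorial possibilities for $G$ acting on $\Sing(S)$ are very limited. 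For most singularity types --- anything other than $4A_1$ and $3A_1$-type configurations --- there is a canonical singular point or canonical unordered pair, e.g.\ the unique singular point in the $A_4, D_4, A_5, E_6$ cases, or a $G$-stable pair among $2A_1, 2A_2$, or a $G$-stable point of a distinguished type in mixed cases like $A_1 + 2A_2$. The residual hard cases are $3A_1$ and $4A_1$, where $G$ could act on $\Sing(S)$ through $\fS_3$ or $\fS_4$ with no short orbit.

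For the $3A_1$ and $4A_1$ cases I would argue more carefully. The $4A_1$ cubic surface is the Cayley cubic, which is $G$-equivariantly birational (via an explicit classical construction --- e.g.\ it is a quotient of $\bP^1\times\bP^1$ or blows down to $\bP^2$ in a $G$-compatible way, and its desingularization is a weak del Pezzo surface of degree $3$) to a surface we can handle; alternatively, the Cayley cubic is toric-like with a large automorphism group, and Condition {\bf (A)} forces a $G$-fixed point once we pass to the minimal resolution, a del Pezzo surface of degree $3$, to which Duncan applies after checking that Condition {\bf (A)} is inherited. For $3A_1$, either $G$ fixes one of the three nodes (short orbit, done), or $G$ permutes them transitively through $C_3$ or $\fS_3$; in the latter case I would pass to the minimal resolution $\tilde S$, a weak del Pezzo surface of degree $3$, note that a generically free $G$-action with Condition {\bf (A)} on $\tilde S$ is linearizable (again by Duncan, after verifying the resolution preserves Condition {\bf (A)} --- which it does, since resolving a du Val point over a $G$-orbit replaces a fixed point by a $\bP^1$ or configuration of $\bP^1$'s on which any abelian stabilizer still has a fixed point), and conclude $S$ is $G$-unirational since $\tilde S \to S$ is $G$-equivariant and dominant.

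The main obstacle I anticipate is the bookkeeping for the resolution argument: verifying that passing from a singular normal cubic surface $S$ to its minimal resolution $\tilde S$ preserves Condition {\bf (A)} for the lifted $G$-action, and that the lifted action is still generically free, so that Duncan's theorem is applicable to $\tilde S$ (a weak del Pezzo surface of degree $3$, for which one must also check Duncan's result or its routine extension applies). Concretely, for an abelian $H \subseteq G$, if $H$ fixes a point of $S$ lying over a node, the exceptional $\bP^1$ (or $ADE$-configuration of $\bP^1$'s) above it is $H$-stable, and an abelian group acting on a tree of $\bP^1$'s has a fixed point --- this needs to be stated cleanly. The other delicate point is ruling out, or directly handling, the genuinely transitive $\fS_3$/$\fS_4$ actions on the nodes without any available index-$2$ reduction; here the resolution-plus-Duncan route is the safety net, and I would lean on it uniformly rather than hunting for ad hoc invariant subvarieties.
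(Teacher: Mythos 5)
Your opening reductions (Duncan for the smooth case, and hunting for a $G$-orbit of singular points of length $1$ or $2$ so that Corollary~\ref{coro:2sing} applies) coincide with the paper's first step, but your list of residual cases and your treatment of them both have problems. First, the list is incomplete: besides $3\sA_1$ and $4\sA_1$, the $3\sA_2$ surface (e.g.\ $x_1x_2x_3=x_4^3$) also has three singular points of the same type that can be permuted transitively, so no short orbit need exist there either. The paper disposes of it by observing that this surface is toric and $G$-equivariantly birational to a del Pezzo surface of degree $6$, where \cite{Duncan} applies.

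The genuine gap is your ``resolution-plus-Duncan safety net.'' The minimal resolution $\tilde S$ of a singular normal cubic is a \emph{weak} del Pezzo surface of degree $3$, containing $(-2)$-curves; Duncan's theorem is stated for del Pezzo surfaces of degree $\ge 3$, i.e.\ with ample anticanonical class, and there is no routine extension to weak del Pezzo surfaces to invoke --- proving such an extension for resolutions of normal cubics is essentially the proposition you are trying to establish, so the argument is circular at exactly the hard spot. Moreover, your justification for transferring Condition {\bf (A)} to $\tilde S$ (``an abelian group acting on a tree of $\bP^1$'s has a fixed point'') is false as stated: an abelian subgroup fixing an $\sA_1$-point can act on the single exceptional $\bP^1$ through $C_2^2\subset\PGL_2$ without fixed points, since only a central extension of it need act linearly on the tangent cone. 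That Condition {\bf (A)} does pass to the resolution is in fact a \emph{consequence} of the proposition (because $G$-unirationality is a birational invariant and implies {\bf (A)} on any smooth model), so it cannot be taken for granted in its proof. The paper instead settles the remaining cases directly: the $4\sA_1$ Cayley cubic is linearizable via the standard Cremona involution (see \cite{CTZ-cubic}, \cite{CMTZ}), and for $3\sA_1$ one writes the normal form \eqref{eqn:3a1eq}, checks that $\Aut(S)\simeq\fS_3$ and that it fixes three smooth points of $S$, so Proposition~\ref{prop:ind-1} gives unirationality for every subgroup. Your sketch for $4\sA_1$ (``quotient of $\bP^1\times\bP^1$ or blows down to $\bP^2$ in a $G$-compatible way'') would need to be made precise along these lines, and for $3\sA_1$ with the nodes permuted transitively some concrete input such as the fixed smooth points is indispensable.
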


\begin{proof}
The case of smooth cubic surfaces was settled in \cite{Duncan}. Examining possible combinations of singularities \cite[Table 1]{sakamaki} and applying Corollary~\ref{coro:2sing}, we obtain $G$-unirationality in all cases, with the possible exception of $3\mathsf A_1$, $3\mathsf A_2$, and $4\mathsf A_1$ singularities. In the last case, the Cayley cubic surface, the action is birational to a linear action, via the standard Cremona involution, see \cite[Section 5]{CTZ-cubic} and \cite[Section 7]{CMTZ}. The $3\sA_2$-cubic surface is toric, and $G$-equivariantly birational to a del Pezzo surface of degree 6, thus $G$-unirational when it satisfies Condition {\bf (A)}, by \cite{Duncan}.  

Any $3\sA_1$-cubic surface $S$ with an action of a subgroup $G\subseteq \Aut(S)$ is $G$-unirational. 
Indeed, we may assume the singular points of $S$ are 
    $$
    [1:0:0:0],\quad [0:1:0:0],\quad[0:0:1:0].
    $$
    By linear algebra, $S$ is given by
\begin{align}
\label{eqn:3a1eq}
\{x_1x_2x_3+(x_1+x_2+ax_3)x_4^2+x_4^3=0\},
\end{align}
for some $a\ne 0$, with $\Aut(S)=\fS_3$, generated by 
\begin{align*}
\sigma_1&:(x_1,x_2,x_3,x_4)\mapsto(x_2,x_1,x_3,x_4),\\
\sigma_2&:(x_1,x_2,x_3,x_4)\mapsto(ax_3,x_1,\frac{x_2}{a},x_4).
\end{align*}
Indeed, it is clear that $\sigma_1,\sigma_2\in\Aut(S)$. 
From the equation, one sees that there are no nontrivial elements in $\Aut(S)$ fixing all three nodes. Therefore, $\Aut(S)=\langle\sigma_1,\sigma_2\rangle$.
Furthermore, $\Aut(S)$ fixes three smooth points on $S$, implying that $S$ is $\Aut(S)$-unirational, by Proposition~\ref{prop:ind-1}.
 \end{proof}

The rest of this section is devoted to a proof of Theorem~\ref{thm:main-cubic}. 
Actions of finite groups on smooth cubic threefolds $X\subset \bP^4$ have been classified by Wei and Yu in \cite{wei-yu}. More precisely, they find all groups that can act, and give examples of cubic threefolds that realize such actions, \cite[Theorem 1.1 and Section 3]{wei-yu}. 
We recall the maximal nonabelian groups, and the unique cubic threefolds admitting such actions, in the notation of \cite{wei-yu}:

\begin{table}[H]
\begin{tabular}{|c|c|c|}
\hline &&\\[-0.4cm]
        & Cubic & Automorphism\\[-0.5cm] && \\\hline &&\\[-0.3cm]
    $X_1$ & $\sum_{i=j}^5 x_j^3=0$ & $C_3^4\rtimes \fS_5$\\[-0.3cm] && \\\hline &&\\[-0.3cm]
  $X_2$&$ 3(\sqrt{3}-1)x_1x_2x_3+\sum_{i=1}^5 x_j^3=0$ & $((C_3^2\rtimes C_3)\rtimes C_4)\times \fS_3$\\[-0.3cm] && \\\hline &&\\[-0.3cm]
$X_5$ & $x_1^2x_2+x_2^2x_3+x_3^2x_4+x_4^2x_5+x_5^2x_1=0$&$\PSL_2(\mathbb F_{11})$\\[-0.3cm] && \\\hline &&\\[-0.3cm]
        $X_6$ &$\sum_{i=1}^6x_i^3=\sum_{i=1}^5x_i=0$&$C_3\times \fS_5$\\[-0.3cm] && \\\hline
\end{tabular}
\vskip 0.2cm
\caption{}\label{table:list}
\end{table}
 
A table of abelian actions is in \cite[Appendix B]{wei-yu}. 
Wei and Yu do not provide normal forms for 
all cubic threefolds admitting an action of a subgroup from the list in Table~ \ref{table:list}.  
We refine their analysis as follows. This relies on {\tt magma}, and supplementary computational material can be found in \cite{CTZuni-tables}.

\

\noindent
We compile a list of all {\em nonabelian} subgroups of the maximal groups appearing in \eqref{table:list}. There are 82  such groups. 

\

\noindent
We compile a list of groups admitting actions on a {\em unique} smooth cubic threefold, using  
\cite[Table 2]{wei-yu} which lists 12 isomorphisms classes of {\em abelian} groups with actions on a unique cubic; our list contains 
nonabelian groups with a subgroup isomorphic to one of the distinguished abelian groups. There are 41 such nonabelian groups, up to isomorphism; we note that several of them admit nonconjugated actions on the same cubic -- there are 50 conjugacy classes of actions. All the actions are realized on one of the cubic threefolds in Table~\ref{table:list}.

\

\noindent
For nonabelian groups on the list we check 
existence of fixed points on $X$. There are $11$ conjugacy classes of actions with fixed points. These actions are unirational 
by Proposition~\ref{prop:ind-1}. 
For those without fixed points, we check existence of fixed points 
for all index-2 subgroups. There are 2 additional classes of actions with fixed points by some index-2 subgroup. These are unirational by Proposition~\ref{prop:ind-2}. 
Then we check Condition {\bf (A)} and eliminate the groups failing it -- these are not $G$-unirational. There are 31 conjugacy classes of such actions. 
We are left with 6 groups. They are
$$
 C_3\times\fS_5, \quad C_3\times\fA_5, \quad C_3\times \mathfrak F_5, \quad C_9\rtimes C_3, 
$$
$$
 C_{11}\rtimes C_5, \quad  \PSL_2(\bF_{11}),
$$
where the first 4 groups act on the Fermat cubic, and the last 2 groups act on the Klein cubic.
Among these, the first 3 groups leave invariant a hyperplane section, which is isomorphic to the Clebsch cubic surface, with generic stabilizer $C_3$, and a residual action of $\fS_5$, $\fA_5$, and $\fF_5$, respectively. These actions on the Clebsch cubic surface are unirational. By Proposition~\ref{prop:cubic-uni-2}, the respective action on the Fermat cubic is unirational.  We are left with 
$$
C_9\rtimes C_3,  \quad  C_{11}\rtimes C_5, \quad  \PSL_2(\bF_{11}).
$$
The unirationality of these actions remains open.

\

\noindent 
We turn to the remaining groups $G$, i.e., those admitting actions on families of cubics. We write down all possible $G$-representations $V$ of dimension 5 and compute semi-invariants of degree 3. There are 491 families of cubic threefolds. 
For each family, we check existence of $G$-fixed points. There are 170 families with a fixed point, on each member. We do the same for all index-2 subgroups of $G$; there are 12 additional families with fixed points. These 182 actions are $G$-unirational, by Propositions~\ref{prop:ind-1} and \ref{prop:ind-2}.  

\

\noindent
Of the remaining actions, there are 296 violating Condition {\bf (A)} for the generic member. In all these cases, we checked that there is an abelian group $A\subset G$ with no fixed points on {\em every} smooth member of the family. 
These actions are not $G$-unirational.  The following example demonstrates how we checked this.

\begin{exam}
\label{exam:no-a}
Consider $G={\tt SmallGroup(324,110)}$, with an action on $\bP^4=\bP(V)$, with $V=\mathbf 1\oplus \chi\oplus V_3$, generated by 
\begin{align*}
    &\mathrm{diag}(1,\zeta_3,1,1,1)\\
    (\mathbf{x})&\mapsto (x_1,x_2,x_3,\zeta_3x_5,\zeta_3^2x_4),\\
     (\mathbf{x})&\mapsto (x_1,x_2,\zeta_3x_5,x_3,\zeta_3^2x_4),
\end{align*}
and
\begin{multline*}
     (\mathbf{x})\mapsto (x_1,x_2,\frac{\zeta_{12}}{3}((\zeta_6-2)x_3+(1-2\zeta_6)x_4+(\zeta_6+1)x_5),\\\frac{\zeta_{4}+\zeta_{12}}{3}(x_3+x_4+x_5),\frac{\zeta_{12}}{3}((1-2\zeta_6)x_3+(\zeta_6-2)x_4+(\zeta_6+1)x_5)). 
\end{multline*}
The family of smooth cubic threefolds invariant under $G$ is given by 
\begin{align}\label{eqn:famsmooth}
    a_1x_1^2+a_2x_2^3 +a_3(x_3^3+x_4^3+x_5^3+ 3(\zeta_4-2\zeta_{12}-1)x_3x_4x_5)=0,
\end{align}
for $a_1,a_2,a_3\in k$. The subgroup $A\subset G$ generated by 
\begin{align*}
    &\mathrm{diag}(1,1,\zeta_3,\zeta_3,\zeta_3)\\
    (\mathbf{x})&\mapsto (x_1,\zeta_3^2x_2,x_4,\zeta_3x_5,\zeta_3^2x_4)
\end{align*}
is isomorphic to $C_3^2.$ The fixed locus $(\bP^4)^A$ consists of 5 points
$$
[1:0:0:0:0],\quad [0:1:0:0:0],\quad[0:0:\zeta_3:\zeta_3:1],
$$
$$
[0:0:1:\zeta_3^2:1],\quad [0:0:\zeta_3^2:1:1].
$$
One observes that no smooth member of the family \eqref{eqn:famsmooth} passes through any of these points. Thus, Condition {\bf (A)} fails for each smooth member of the family.
\end{exam}

\

\noindent
We are left with 6 isomorphism classes of groups
$$
\fA_4, \quad \fA_5, \quad  \mathfrak F_5, \quad  \fS_5, \quad C_3\times\fA_4, \quad C_3\times\fS_4, 
$$
giving rise to 13 actions, up to conjugation:
\begin{itemize}
    \item $\fA_5$ admits 2 actions, arising from an irreducible $V$ and a reducible $V=\mathbf 1\oplus V_4$,
    \item  $C_3\times \fS_4$ admits 2 actions, 
    \item  $C_3\times \fA_4$ admits 6 actions, 
\end{itemize}
and all others have a unique action. All actions, except the one arising from an {\em irreducible} 5-dimensional representation of $\fA_5$, have the property that $X\subset \bP(V)$ admits a $G$-invariant hyperplane section $S\subset X$, from a decomposition of the representation $V={\mathbf 1}\oplus V_4$, for some faithful 4-dimensional representation $V_4$ of $G$.

We observe that the cubic surface $S$ is normal and not a cone. Indeed, $S$ has isolated singularities since $X$ is smooth. If $S$ were a cone, then $G$ would fix a point on $S$. Using equations of invariant cubics, we see that this is impossible when $X$ is smooth. 
In particular, we find:
\begin{itemize}
    \item When $G$ is isomorphic to $\fA_5, \fF_5$, or $\fS_5$, there is a unique $G$-invariant hyperplane section $S$, which is the Clebsch cubic surface, with trivial generic stabilizer. The $G$-action on $S$ satisfies Condition {\bf(A)} and is unirational, see \cite[Theorem 1.1]{DI}.
    \item When $G$ is isomorphic to $\fA_4$, there are two invariant hyperplanes. Both of them have trivial generic stabilizer, and the $G$-actions on them satisfy Condition {\bf (A)}.
    \item When $G$ is isomorphic to $C_3\times \fA_4$ or $C_3\times\fS_4$, there are two invariant hyperplanes. One of them has generic stabilizer $C_3$ and the $G/C_3$-action on the corresponding cubic surface $S$ satisfies Condition {\bf (A)}. 
\end{itemize}

To conclude, in each case when $G$ acts via a reducible representation, there exists an $G$-invariant normal cubic surface $S\subset X$ which is not a cone such that the effective action of $G/G'$ on $S$ satisfies Condition {\bf (A)}, where $G'$ is the generic stabilizer of $S.$  By Proposition~\ref{prop:surface-g}, the $G$-action on $S$ is unirational. Applying Proposition~\ref{prop:cubic-uni-2}, we see that the $G$-action on $X$ is unirational.



We are left with the $\fA_5$-action with the irreducible representation. Unirationality of this action remains open.

\begin{rema} 
Assuming \cite[Conjecture 10.4 and Theorem 10.5]{DR}, $G$-unirationality of $X$ {\em would follow} from $G_3$-unirationality of $X$, where $G_3$ is a 3-Sylow subgroup of $G$. 
There are six nonabelian 3-groups contained in \eqref{table:list}: 
$$
\mathrm{He}_3, C_9\rtimes C_3, C_3\wr C_3, C_3\times\mathrm{He}_3, (C_3\times C_9)\rtimes C_3, C_3\times(C_3\wr C_3).
$$ 
The following, obtained via {\tt magma} computations,  {\em would} imply that all $G$-actions on $X$, with the possible exception of $C_9\rtimes C_3$, are $G$-unirational:
Let $X$ be a smooth cubic threefold and $G\subset\Aut(X)$ a 3-group. Then: 
\begin{itemize}
    \item the $G$-action fixes a point on $X$, or
    \item the $G$-action fails condition {\bf (A)}, or
    \item $X$ is the Fermat cubic threefold and $G=C_9\rtimes C_3$ is generated by 
    $$
    \mathrm{diag}(1,1,\zeta_3,\zeta_3^2,1),  \quad  \mathrm{diag}(1,1,1,\zeta_3,\zeta_3^2), \quad(x_1,\zeta_3x_2,x_4,\zeta_3x_5,x_3).
    $$
\end{itemize}

\end{rema}

\section{$G$-unirationality of singular cubic threefolds}
\label{sect:sing}

This section is devoted to the proof of the following theorem:
\begin{theo}\label{thm:singcub-main}
    Let $X$ be a cubic threefold with isolated singularities which is not a cone, carrying a generically free regular $G$-action. Assume that the $G$-action on $X$ satisfies Condition {\bf (A)}. Then $X$ is $G$-unirational.
\end{theo}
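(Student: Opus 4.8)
The plan is to reduce to cubic surfaces, exactly as in the smooth case (Proposition~\ref{prop:cubic-uni-2}), but accounting for the singularities of $X$. First I would observe that, by Theorem~\ref{thm:bridge}, $G$-unirationality is equivalent to unirationality of all twists ${}^TX$ over all field extensions $K/k$; since $X$ has isolated singularities and is not a cone, each twist ${}^TX$ is again a cubic hypersurface over $K$ with isolated singularities and not a cone, so by \cite{kollarcubic} unirationality of ${}^TX$ over $K$ holds as soon as ${}^TX(K)\neq\emptyset$. Hence it suffices to produce, over every such $K$, a $K$-rational point on ${}^TX$; equivalently, to produce a $G$-invariant unirational (over $k$) subvariety $S\subset X$, for then the twist ${}^TS$ is unirational over $K$, has a $K$-point, and maps to ${}^TX$. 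So the whole problem becomes: \emph{find a $G$-invariant unirational subvariety of $X$.}

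Next I would case-split on the dimension of a minimal $G$-orbit in the singular locus $\Sing(X)$, which is a finite $G$-stable set since $X$ has isolated singularities. If $G$ has an orbit of length $1$ or $2$ in $\Sing(X)$, then Corollary~\ref{coro:2sing} applies directly and $X$ is $G$-unirational. Otherwise every $G$-orbit in $\Sing(X)$ has length $\geq 3$. Here I would try to exhibit a $G$-invariant hyperplane section $S\subset X$ (from an invariant hyperplane in $\bP(V)$, i.e.\ a trivial summand or more generally an invariant codimension-one subspace) and check that $S$ is a normal cubic surface, not a cone; then Proposition~\ref{prop:surface-g} gives $G$-unirationality of $S$ provided Condition {\bf (A)} holds on $S$, and Proposition~\ref{prop:cubic-uni-2} (whose proof uses only that $S$ is a $G$-invariant hyperplane section of a cubic that is not a cone, and that tangent spaces meet $X$ in non-cones generically — valid here by \cite{starsing}) upgrades this to $X$. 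When no invariant hyperplane exists or $S$ fails to be normal/non-cone, I would instead use the singular points themselves: projection from a $G$-fixed singular point linearizes; projection from a $G$-stable pair of singular points reduces to an index-$2$ subgroup via Proposition~\ref{prop:ind-2}; and for larger orbits of singular points I would fall back on Proposition~\ref{prop:ind-1} applied to a suitable $G$-invariant subcubic, or to the fact that a cubic with many nodes is often rational and admits invariant linear/birational models (as with the Cayley and Segre cubics in the surface case).

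The main obstacle, as in Duncan's work and in the smooth case treated above, is the handful of ``sporadic'' configurations: highly symmetric singular cubic threefolds with small singular loci whose orbits have length $\geq 3$, no invariant hyperplane section that is a normal non-cone cubic surface, and no convenient fixed point. I expect these to be exactly the analogues of the Segre cubic threefold (ten nodes, $\fS_6$-action) and of nodal specializations of the Klein cubic. For the Segre cubic one knows it is $\fS_6$-equivariantly rational / linearizable (it is birational to $\bP^3$ via the Cremona-type construction, cf.\ the references \cite{CTZ-cubic}, \cite{CMTZ} cited in the introduction for exactly this class), so even though cohomological obstructions to stable linearizability exist, $G$-unirationality still follows from an explicit equivariant dominant rational map from a linear action. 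Concretely, I would enumerate the possible singularity types of $X$ using the classical classification, and for each type determine $\Aut(X)$ (or enough of it), verify that Condition {\bf (A)} forces the orbit structure on $\Sing(X)$ into one of the tractable cases above, and handle the finitely many leftover symmetric cubics by hand — the only real work being to confirm that each of these admits an explicit $G$-invariant unirational subvariety or an equivariant birational model with a linear action, which the literature on the Segre cubic and related nodal cubics supplies.
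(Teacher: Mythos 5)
Your opening reduction contains a genuine error. From a $G$-invariant subvariety $S\subset X$ that is merely unirational over $k$ you cannot conclude that the twist ${}^TS$ is unirational over $K$: by Theorem~\ref{thm:bridge} that conclusion is equivalent to $S$ being $G$-unirational, and the whole difficulty of the subject is precisely that twists of geometrically rational varieties may have no $K$-points at all (an invariant conic with nontrivial Amitsur class already fails). So the correct reduction --- and the one the paper actually runs, via Proposition~\ref{prop:cubic-uni-2} and its tangent-bundle construction, with the no-name lemma to absorb generic stabilizers --- is to a $G$-invariant subvariety that is $G$-unirational. Your later use of Proposition~\ref{prop:surface-g} implicitly supplies this for hyperplane sections, but the reduction as you state it is false, and for subvarieties with generic stabilizer the twisting formalism does not even apply directly.

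The more serious gap is that your plan offers no argument for exactly the configurations where the real work lies, and your guess at the ``sporadic'' cases is off. The paper's proof is a case analysis over the singularity types classified in its references; Corollary~\ref{coro:2sing} and invariant hyperplane sections do dispose of many types, but the residual cases are not the Segre cubic and Klein-type degenerations. They are: $3\sA_1$ with $G=\GL_2(\bF_3)$, where there is no usable hyperplane section and the tangent construction of Proposition~\ref{prop:cubic-uni-2} is run along a $G$-invariant \emph{line} after checking its general point is not a star point; $4\sA_1$ with the nodes in a plane, where the same construction is applied to the invariant plane; $3\sD_4$, which needs a delicate subgroup analysis inside $(\bG_m^2\rtimes\fS_3)\times\fS_3$, an equivariant model $S\times\bP^1$ with $S$ a sextic del Pezzo surface, and a $3\sA_2$-surface with generic stabilizer; $5\sA_n$, handled by reduction to a smooth quadric threefold and Theorem~\ref{thm:quad-uni}; and above all $6\sA_1$ with no plane, which requires the small resolution $X^+\to X$ with its $\bP^1$-bundle structure over $\bP^2$, Lemma~\ref{lemm:line} (using $\mathrm{B}_0(G)=0$ and $\rH^1(G,\Pic(\bP^2))=0$) to lift the action to a rank-two vector bundle, and the no-name lemma. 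None of these is reachable by ``invariant hyperplane section or small orbit of singular points.'' Finally, your claim that the Segre cubic is $\fS_6$-equivariantly rational or linearizable is unsupported --- this family contains actions obstructed from stable linearizability --- and the paper instead dominates it equivariantly by $\Gr(2,6)$, whose $\fS_6$-action is stably linearizable. As it stands, your proposal reproduces the easy half of the paper's argument and leaves the hard half without a method.
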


The proof is based on the classification of configurations of singularities and possible group actions in \cite{CTZ-burk}, \cite{CTZ-cubic}, \cite{CMTZ}. In most of the cases, we can apply Propositions~\ref{prop:ind-1}, \ref{prop:cubic-uni-2}, and Corollary~\ref{coro:2sing}. The remaining cases, 
\begin{itemize}
    \item $3\sA_1,\quad G=\GL_2(\bF_3)$,
    \item $3\sD_4,\quad G=(\bG_m^2\times \fS_3)\rtimes\fS_3$, 
\item $6\sA_1,\quad G=\fS_4, \fS_5$,
\end{itemize}
require a separate analysis. 


The linearizability properties of singular cubic threefolds have been studied in \cite{CTZ-cubic} and \cite{CMTZ}; there are linearizable actions, as well as actions failing stable linearizability. The classification results of that paper allow us to address $G$-unirationality.

By Proposition~\ref{prop:ind-2}, cubic threefolds $X$ with a distinguished singularity or with the following singularity types are $G$-unirational, for all $G\subseteq \Aut(X)$: 
$$
2\sA_n,n=1,\ldots,5,\quad  2\sD_4, \quad  2\sA_2+2\sA_1, \quad 2\sA_3+2\sA_1, 
$$
$$
2\sD_4+2\sA_1, \quad 2\sA_2+3\sA_1, \quad  2\sA_3+3\sA_1, \quad 3\sA_2+2\sA_1, \quad 2\sD_4+3\sA_1.
$$
We summarize the analysis of the remaining cases:

\subsection*{$3\sA_n, 3\sD_4, n=1,2,3$}
With {\tt magma}, we verify that in all cases, except 
\begin{itemize}
    \item[(1)] $3\sA_1,\quad G=\GL_2(\bF_3)$,
    \item[(2)] $3\sD_4,\quad G=(\bG_m^2\times \fS_3)\rtimes\fS_3$, 
\end{itemize}
the cubic threefold $X$ contains a $G$-invariant 
$G$-unirational hyperplane section $S\subset X$, a (possibly singular) cubic surface. Indeed, we check that $S$ has fixed points upon restriction of the action to abelian subgroups of the quotient of $G$ by the generic stabilizer of $S$. Proposition~\ref{prop:surface-g} implies that $S$ is equivariantly unirational for the action of the quotient, and thus for the action of $G$. 
It then follows from Proposition~\ref{prop:cubic-uni-2} that $X$ is $G$-unirational.

\ 

{\em Case} 1:  $X\subset \bP(V)$ is the cubic threefold with $3\sA_1$-singularities given by 
$$
x_1x_2x_3+x_1q_1+x_2q_2+x_3q_3=0,
$$
where \begin{align*}
    q_1&=x_4^2+x_4x_5+x_5^2,\\
    q_2&=x_4^2+\zeta_3bx_4x_5+\zeta_3^2x_5^2,\\
    q_3&=x_4^2+\zeta_3^2bx_4x_5+\zeta_3x_5^2,
\end{align*}
for $b^2=\sqrt{-2}$, and $V=V_2\oplus V_3$, for irreducible representations $V_2$ and $V_3$ of $G=\GL_2(\bF_3)$ generated by 
\begin{align*}
    &\mathrm{diag}(1,1,1,-1,-1),\\
(x_1,\ldots,x_5)\mapsto&(x_2,x_3,x_1,x_4,\zeta_3x_5),\\(x_1,\ldots,x_5)\mapsto&(\zeta_3x_2,\zeta_3^2x_1,x_3,x_4,\zeta_6x_5),\\
(x_1,\ldots,x_5)\mapsto&(x_2,\zeta_6^5x_1,\zeta_6x_3,\frac{\zeta_6bx_4+x_5}{1-\zeta_3},\frac{\zeta_6x_4+bx_5}{1-\zeta_3}).
\end{align*}
The $G$-invariant line $\mathfrak l =\bP(V_2)=\{x_1=x_2=x_3=0\}$ is contained in the smooth locus of $X$. Note that $\mathfrak l$ is $G$-unirational, but the $G$-action is not generically free on $\mathfrak l$. One can check that a general point of $\mathfrak l$ is not a {\em star} point in $X$, i.e., the intersection of the embedded tangent space at this point with $X$ is not a cone. Applying the argument of Proposition~\ref{prop:cubic-uni-2} to the restriction of the tangent bundle $\mathcal T$ of $X$ to $\mathfrak l$, we obtain the 
$G$-unirationality of $X$.

\

{\em Case} 2: $X$ is given by 
\begin{equation} 
\label{eqn:cubic-123}
x_1x_2x_3+x_4^3+x_5^3=0
\end{equation}
and 
$$
\Aut(X)=N\times \fS_3, \quad N:=\bG_m^2\rtimes\fS_3, 
$$
where $N$ is 
acting via $\fS_3$-permutation and torus actions on $x_1,x_2,x_3$, and $\fS_3$ via the irreducible 2-dimensional representation on $x_4,x_5$. We use notation from \cite[Proposition 6.6]{CMTZ}
\begin{align}\label{eqn:3D4aut}
    N=\langle \tau_{a,b}, \sigma_{(123)}, \sigma_{(12)}\rangle,\quad 
\fS_3=\langle \eta, \sigma_{(45)}\rangle, 
\end{align}
where
\begin{align*}
    \tau_{a,b}&:\mathrm{diag}(a,b,(ab)^{-1},1,1),\\
    \eta&:\mathrm{diag}(1,1,1,\zeta_3,\zeta_3^2),
\end{align*}
and
$\sigma$ are permutations of the variables as is labeled.
Consider the subgroup 
\begin{align}\label{eqn:3D4H}
H:=\langle \tau_{a,b}, \sigma_{(123)}\cdot \eta\rangle\subset \Aut(X). 
\end{align}
By \cite[Proposition 6.6]{CMTZ}, $X$ is $H$-equivariantly birational to $S\times \bP^1$, where $S$ is the del Pezzo surface of degree 6. In particular, for every $G\subset H$ satisfying Condition {\bf (A)}, we obtain $G$-unirationality of $X$.

\begin{lemm}
Let $X$ be the cubic \eqref{eqn:cubic-123}, 
$G\subset \Aut(X)$ a finite subgroup. Then $X$ is $G$-unirational if and only if Condition {\bf (A)} is satisfied. 
\end{lemm}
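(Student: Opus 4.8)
The plan is to reduce to the subgroup $H$ of \eqref{eqn:3D4H} using the structure of $\Aut(X)=N\times\fS_3$, where $N=\bG_m^2\rtimes\fS_3$. Since $\Aut(X)$ contains a torus factor $\bG_m^2$, any finite subgroup $G\subset \Aut(X)$ projects to finite subgroups of the two $\fS_3$-quotients and its intersection with $\bG_m^2$ is a finite diagonal subgroup; I would first record this description explicitly, writing a general element of $G$ as $\tau_{a,b}\cdot\sigma\cdot\eta^k\cdot\sigma_{(45)}^\ell$ with $\sigma$ a permutation of $x_1,x_2,x_3$. The ``only if'' direction is immediate from the general principle (Condition \textbf{(A)} is necessary for $G$-unirationality), so the content is the ``if'' direction.

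Next I would split on the image $\bar G$ of $G$ under the projection $\Aut(X)\to\fS_3\times\fS_3$ onto the two permutation factors (permutations of $\{x_1,x_2,x_3\}$ and of $\{x_4,x_5\}$), intersected with the ``diagonal'' direction picked out by $H$. The key observation is that $H$ was chosen precisely so that the $\fS_3$ acting on $x_1,x_2,x_3$ via $\sigma_{(123)}$ is identified with the $\fS_3$ acting on $x_4,x_5$ via $\eta$; concretely $H$ contains $\sigma_{(123)}\cdot\eta$ and $\tau_{a,b}$. If $G\subseteq H$, we are done immediately by \cite[Proposition 6.6]{CMTZ}: $X$ is $H$-equivariantly birational (hence $G$-equivariantly birational) to $S\times\bP^1$ with $S$ a del Pezzo surface of degree $6$, and $G$-unirationality of $S\times\bP^1$ under Condition \textbf{(A)} follows since del Pezzo surfaces of degree $6$ are $G$-unirational when \textbf{(A)} holds (by \cite{Duncan}, or directly: $\mathrm{Am}(S,G)=0$ and $\mathrm{B}_0(G)=0$ for subgroups of the automorphism group of a degree-$6$ del Pezzo), and $G$-unirationality is preserved under products by a trivial $\bP^1$. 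So the remaining task is to handle $G\not\subseteq H$, i.e. groups whose image in $\fS_3\times\fS_3$ is not contained in the ``diagonal'' copy of $\fS_3$ that defines $H$.

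For $G\not\subseteq H$ I would argue as follows. The cubic \eqref{eqn:cubic-123} contains the $G$-invariant line $\mathfrak l=\{x_1=x_2=x_3=0\}$ (the locus $x_4^3+x_5^3=0$ is cut further, but $\mathfrak l$ itself as a $\bP^1$ in the $x_4,x_5$-coordinates is not on $X$; rather the three points $[0{:}0{:}0{:}1{:}-\zeta_3^j]$ lie on $X$) --- let me instead use the plane $\Pi=\{x_4=x_5=0\}$, which is a plane of $\sA_2$-type, and the complementary line, whichever is $G$-invariant in the given case. The robust route is: since $X$ is given by $x_1x_2x_3+x_4^3+x_5^3=0$, the surface $S=X\cap\{x_4=\zeta x_5\}$ for a suitable choice (or $S=X\cap\{$ linear form $\}$ invariant under the relevant subgroup) is a cubic surface, and one checks case-by-case through the finite list of possible $\bar G\subset\fS_3\times\fS_3$ that either (i) $G$ fixes a point of $X$ --- then apply Proposition~\ref{prop:ind-1}; or (ii) $G$ has an index-$2$ subgroup fixing a point or more generally which is $H$-conjugate into a $G$-unirational situation --- then apply Proposition~\ref{prop:ind-2}; or (iii) $G$ preserves a hyperplane section $S$ which is a normal cubic surface, not a cone, on which the induced effective action satisfies Condition \textbf{(A)} --- then $S$ is $G$-unirational by Proposition~\ref{prop:surface-g} and $X$ is $G$-unirational by Proposition~\ref{prop:cubic-uni-2}. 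One must also observe $\mathrm{B}_0(G)=0$ for all these groups (they are metabelian: extensions of subgroups of $\fS_3\times\fS_3$ by abelian groups), so the Amitsur obstruction vanishes and no case is genuinely obstructed.

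\textbf{Main obstacle.} The delicate point is organizing the finite case analysis on $\bar G\subseteq\fS_3\times\fS_3$ so that every $G$ with $G\not\subseteq H$ and satisfying \textbf{(A)} falls into one of the three buckets above, and in particular verifying in the ``hyperplane section'' cases that the chosen $S$ is normal, not a cone, and that Condition \textbf{(A)} for $G$ on $X$ really forces Condition \textbf{(A)} for the quotient action on $S$ --- this last implication is the analogue of the conic/quadric-section arguments in the proof of Theorem~\ref{thm:quad-uni} and is where one pays attention to which abelian subgroups $A\subset G$ can fail to have fixed points on $S$ while still having fixed points on $X$ (necessarily among the three $\sA_2$-singular points or the distinguished points of $\Pi$). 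I expect this bookkeeping, rather than any single hard geometric input, to be the bulk of the work; it can be, and presumably is, discharged by the {\tt magma} computation referenced in the surrounding text, checking existence of fixed points of abelian subgroups on the relevant $S$ for each conjugacy class of finite $G\subset\Aut(X)$ with $G\not\subseteq H$.
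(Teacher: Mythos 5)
Your high-level architecture (reduce to the subgroup $H$ of \eqref{eqn:3D4H} via \cite[Proposition 6.6]{CMTZ}, and handle the remaining groups by a fixed point, an index-2 reduction, or an invariant hyperplane section with Condition {\bf (A)} descent) is essentially the paper's, but the way you propose to discharge the residual case contains a genuine gap. Your plan rests on "a finite case analysis on $\bar G\subseteq\fS_3\times\fS_3$ \ldots discharged by the {\tt magma} computation, checking \ldots for each conjugacy class of finite $G\subset\Aut(X)$ with $G\not\subseteq H$." This cannot be carried out: $\Aut(X)=(\bG_m^2\rtimes\fS_3)\times\fS_3$ contains a two-dimensional torus, so the image $\bar G$ does not determine $G$, the subgroup $G\cap\bG_m^2(k)$ is generated by elements $\tau_{a,b}$ with roots of unity of arbitrary order, and there are infinitely many conjugacy classes of finite subgroups — there is no finite list to feed to {\tt magma}. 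What is actually required (and what the paper does by hand) is an analysis over these arbitrary roots of unity: after using Propositions~\ref{prop:ind-1} and \ref{prop:ind-2} on the two distinguished orbits $P_1$ (the three singular points) and $P_2=\{[0{:}0{:}0{:}1{:}-\zeta_3^j]\}$ to reduce to $G$ acting through $C_3$ on both and containing $m_1=\sigma_{(123)}\cdot\eta$ up to conjugation, one shows that Condition {\bf (A)} excludes $\eta$ and $\tau_{\zeta_3,\zeta_3}$ from $G$, and identities such as $\tau_{1,\zeta_3}\bigl(m_1\cdot\tau_{1,\zeta_3}\cdot m_1^{-1}\bigr)^2=\tau_{\zeta_3,\zeta_3}$ then force $|G\cap\bG_m^2(k)|$ to be coprime to $3$ and force any non-toric kernel element to be, after conjugation, $m_2=\tau_{\zeta_3^{\pm 1},\zeta_3^{\pm 1}}\cdot\eta$. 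None of this structure is visible at the level of $\bar G$, and you supply no substitute for it.

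Relatedly, you leave the choice of the invariant hyperplane section vague ("$x_4=\zeta x_5$ for a suitable choice") and correctly flag the descent of Condition {\bf (A)} to it as the delicate point, but you do not resolve it. In the paper this is resolved canonically: the forced element $m_2$ is central in $G$, and its fixed locus on $X$ is exactly $S=X\cap\{x_5=0\}$, a cubic surface with $3\sA_2$ singularities carrying a generically free action of $Q=G/\langle m_2\rangle$. Precisely because $S=X^{\langle m_2\rangle}$, any abelian $A\subseteq Q$ without fixed points on $S$ lifts to the abelian subgroup $\langle m_2,A\rangle\subseteq G$ without fixed points on $X$, so Condition {\bf (A)} descends automatically; one then concludes with Proposition~\ref{prop:surface-g} (or the $3\sA_2$ case of Theorem~\ref{thm:singcub-main}) and the tangent-bundle/hyperplane-section argument of Proposition~\ref{prop:cubic-uni-2}. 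So the missing ingredient is not bookkeeping but the identification of this specific central element and its fixed surface; without it, the case $G\not\subseteq H$ remains open in your write-up (the $G\subseteq H$ case, as you present it, is fine and agrees with the paper).
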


\begin{proof}
As discussed, Condition {\bf (A)} is necessary for $G$-unirationality. We now assume that the $G$-action on $X$ satisfies Condition {\bf (A)} and proceed to show that $X$ is $G$-unirational.

Observe that there are two distinguished $G$-orbits of three points 
$$
P_1=\{[1:0:0:0:0],[0:1:0:0:0],[0:0:1:0:0]\},
$$
$$
P_2=\{[0:0:0:1:-1],[0:0:0:1:-\zeta_3],[0:0:0:1:-\zeta_3^2]\}. 
$$
Using Propositions~\ref{prop:ind-1} and \ref{prop:ind-2}, we are reduced to the case when $G$ acts on both $P_1$ and $P_2$ via $C_3$. Up to conjugation in $\Aut(X)$, $G$ contains
$$
m_1:=\sigma_{(123)}\cdot \eta.
$$
The $G$-action on $P_1$ induces a homomorphism $G\to C_3$. Let $N$ denote its kernel so that we have
$$
G=\langle N,m_1\rangle.
$$

\noindent 
If $N\subset \bG_m^2(k)$, then $G\subset H$, from  \eqref{eqn:3D4H}, and $X$ is $G$-unirational. 

\ 

\noindent 
If $N\not\subset \bG_m^2(k)$, then $N$ contains 
$$
m_2:=\tau_{a,b}\cdot\eta,
$$
for some $\tau_{a,b}\in\bG_m^2(k)$, and $G$ is generated by 
$m_1$, $m_2$, and a subgroup 
$$
M=\bG_m^2(k)\cap G.
$$
Up to replacing $m_2$ by its power, we may assume that $\mathrm{ord}(\tau_{a,b})$ is a power of $3$, so that both $a$ and $b$ are 3-power roots of unity. Condition {\bf (A)} implies that $G$ does not contain either of 
$$
\eta \quad\text{or}\quad\tau_{\zeta_3,\zeta_3}=\mathrm{diag}(\zeta_3,\zeta_3,\zeta_3,1,1),
$$
since both abelian groups
$$
\langle m_1,\eta\rangle\simeq\langle m_1,\tau_{\zeta_3,\zeta_3}\rangle\simeq C_3^2
$$ 
do not fix points on $X$. In particular, $(a,b)\ne (1,1)$. 

 If $|M|$ is not coprime to $3$, then $M$ contains either $\tau_{1,\zeta_3}$ or $\tau_{\zeta_3,1}$, which is impossible since
$$
\tau_{1,\zeta_3}\big(m_1\cdot\tau_{1,\zeta_3}\cdot m_1^{-1}\big)^2=\tau_{\zeta_3,1}\big(m_1^2\cdot\tau_{\zeta_3,1}\cdot m_1^{-2}\big)^2=\tau_{\zeta_3,\zeta_3}\not\in G.   
$$
Hence, $|M|$ is coprime to $3$, the order of $\tau_{a,b}$ must be $3$, and
$$
\tau_{a,b}\in\big\{\tau_{1,\zeta_3},\tau_{1,\zeta_3^2},\tau_{\zeta_3,1},\tau_{\zeta_3^2,1},\tau_{\zeta_3,\zeta_3},\tau_{\zeta_3^2,\zeta_3^2}\big\}.
$$
Conjugating $\tau_{a,b}$ by $m_1$, we may assume that 
\begin{itemize}
\item either $\tau_{a,b}=\tau_{\zeta_3,\zeta_3}$ and $m_2=\tau_{\zeta_3,\zeta_3}\cdot\eta$, or
\item $\tau_{a,b}=\tau_{\zeta_3^2,\zeta_3^2}$ and $m_2=\tau_{\zeta_3^2,\zeta_3^2}\cdot\eta$.
\end{itemize}
Without loss of generality, we may assume that  $m_2=\tau_{\zeta_3,\zeta_3}\cdot\eta$. Then $G$ is generated by $m_1=\sigma_{(123)}\cdot \eta$, $m_2$, and a subgroup 
$M$ of order coprime to $3$.

Let $S:=X\cap\{x_5=0\}$, and $Q:=\langle m_1, M\rangle$. Then $S$ is a cubic surface with $3\sA_2$-singularities, with a generically free action of $Q$ and generic stabilizer $m_2$, i.e., 
$S=X^{\langle m_2\rangle}$. Also note that $Q\simeq G/\langle m_2 \rangle$. The $Q$-action on $S$ satisfies Condition {\bf (A)}. Indeed, if $A\subset Q$ is an abelian subgroup that does not fix any points on $S$, then $\langle m_2, A\rangle\subset G$ is an abelian subgroup that does not fix points in $X$. By Theorem~\ref{thm:singcub-main}, the $Q$-action, and thus the $G$-action on $S$ is unirational, which implies that the $G$-action on $X$ is unirational, by Proposition~\ref{prop:ind-1}. The $Q$-unirationality of the surface $S$ also follows from the fact that $S$ is $Q$-birational to $\mathbb{P}^2$ with a linear action of $Q$ and classification of all possibilities for the group $Q$ given in \cite[Theorem~4.7]{DI}. 
\end{proof}

\subsection*{$4\sA_n, n=1,2$, in general position}
The singular points are in linear general position. From the equations in \cite[Section 5]{CTZ-cubic} and \cite[Section 7]{CMTZ}, we see that $X$ contains a $G$-invariant Cayley cubic surface (give by $x_5=0$), which is $G$-linearizable. 
Proposition~\ref{prop:cubic-uni-2} shows the $G$-unirationality of $X$.

\subsection*{$4\sA_1$, contained in a plane}
Let $\Pi\subset X$ be the plane containing the four nodes. 
Such $X$ are equivariantly birational to a smooth intersection of two quadrics, discussed in Section~\ref{sect:2quad}, with their automorphisms and normal forms. The analysis there allows us to check that a general point on $\Pi$ is not a star point explicitly 
via {\tt magma}, in each case. The same argument as in 
Proposition~\ref{prop:cubic-uni-2} shows $G$-unirationality of $X$.

\subsection*{$5\sA_n, n=1,2$}
Cubics in this case are equivariantly birational to a smooth quadric threefold. The action is either a subgroup of the $\fS_5$-permutation action on coordinates, or a subgroup of the irreducible representation from $\fA_5$, which is stably linearizable by Theorem~\ref{thm:quad-uni}, see also \cite[Proposition 6.2]{HT-quadric}.

\subsection*{$6\sA_1$} 
The classification of possible actions in \cite[Section 7]{CTZ-cubic} yields subcases:
    \begin{itemize} 
    \item[(1)] {\bf No plane:} $\Aut(X)$ is one of the following
    $$
C_2, \fS_3,\fS_4, \mathfrak D_4, C_2^2, \mathfrak D_6, \fS_3^2\rtimes C_2, \fS_5. 
    $$
All groups failing  Condition {\bf (A)} contain the unique subgroup 
$C_3^2\subset \fS_3^2\rtimes C_2$; they are: 
$$
C_3^2, \quad 
C_3\rtimes\fS_3, \quad 
C_3\times\fS_3, \quad  
\fS_3^2,  \quad 
C_3\rtimes \fS_3.C_2, \quad 
\fS_3^2\rtimes C_2.
$$

\item[(2)]
{\bf One plane:} all actions are linearizable, see \cite[Section 7]{CTZ-cubic}.
\item[(3)] 
{\bf Three planes:}  $\Aut(X)$ is one of the following
$$
C_2^2, C_2^3, C_2\times\fS_3, C_2\times\fS_4,
$$
where $X$ is given by 
$$
x_2x_3x_4+ax_1^3+x_1^2(b_1x_2+b_2(x_3+x_4))+x_1(x_2^2+x_3^2+x_4^2-x_5^2)=0
$$
for some $a,b_1,b_2\in k$. 
\end{itemize}

In Case (3), the point $[0:0:0:0:1]\in X$ is fixed by any $G\subseteq \Aut(X)$; thus, $X$ is $G$-unirational, by 
Proposition~\ref{prop:ind-1}.

We turn to Case (1).
Note that the action on $X$ is not stably linearizable if 
the invariant class group has rank one, by \cite[Proposition 7.5]{CTZ-cubic}. To prove unirationality, it suffices to consider
the case when $\mathrm{Cl}^{G}(X)=\bZ^2$, by passing to a suitable subgroup of index two, using Proposition~\ref{prop:ind-2}. 

This $G$ leaves invariant each class of cubic scrolls on $X$. 
Recall from \cite[Diagram 7.1]{CTZ-cubic} that an invariant divisor class of cubic scrolls shows that there exists a $G$-equivariant small resolution $X^+\to X$ such that $X^+$ is a weak Fano 3-fold that admits a $G$-equivariant $\mathbb{P}^1$-bundle structure
$$
p^+: X^+\to \bP^2,
$$
whose fibers are mapped to lines in the cubic $X$. 
Note that 
$$
X^+\simeq\bP(\mathcal E),
$$
where $\mathcal{E}$ is a vector bundle of rank 2 on $\mathbb{P}^2$,  described in  \cite[Theorem~3.2(7)]{langer}. The weak Fano 3-fold $X^+$ also appeared in \cite[Theorem~3.6]{Jahnke}. 

Note that $\rH^1(G,\Pic(\mathbb{P}^2))=0$ and $\mathrm{B}_0(G)=0$, for all $G$ appearing in this case; the smallest $G$ with nontrivial Bogomolov multiplier has order $|G|=p^5$, see, e.g., \cite{Mora}. By Lemma~\ref{lemm:pn}, and Condition {\bf (A)} for $X$, the base $\mathbb{P}^2$ is $G$-unirational. 
Lemma~\ref{lemm:line} then implies that the $G$-action lifts to $\mathcal E$. 
Note that the $G$-action on the base $\bP^2$ may have generic stabilizers. Nevertheless, applying the no-name lemma as in the proof of Proposition~\ref{prop:cubic-uni-2} yields $G$-equivariant birationality 
$$
\mathcal E\times V  \sim_G \mathbb{P}^2\times V\times \bA^2, 
$$
where $V$ is a faithful $G$-representation, with trivial action on $\bA^2$. This implies $G$-unirationality of $X$. 

\subsection*{$8\sA_1$}
In all  cases, there is a $G$-invariant Cayley cubic surface $S\subset X$ (given by $x_3=0$ in the equations in \cite[Proposition 8.2]{CTZ-cubic}), with a $G$-fixed point on $S$. It follows that $S$ is $G$-unirational, and we can apply Proposition~\ref{prop:cubic-uni-2}.

\subsection*{$9\mathsf A_1$} 
We recall the normal form, see \cite[Section 9]{CTZ-cubic}:
$$
x_1x_2x_3-x_4x_5x_6=a(x_1+x_2+x_3)+x_4+x_5+x_6=0
$$
for $a^3\ne 0,-1$,
with $G$ a subgroup of $\fS_3^2$, when $a^3\neq 1$, and of 
$\fS_3^2\rtimes C_2$, when $a=1$. 
There is an invariant smooth cubic surface $S\subset X$, given by
$$
x_1+x_2+x_3=0.
$$
When the $G$-action on $X$ satisfies Condition {\bf (A)}, then so does the $G$-action on $S$. Thus $S$ is $G$-unirational, and we apply Proposition~\ref{prop:cubic-uni-2}.

\subsection*{$10\mathsf A_1$} We have a dominant rational map $\mathrm{Gr}(2,6)\to X$, the Segre cubic threefold, equivariantly for the action of $G=\fS_6=\Aut(X)$. By \cite[Proposition 19]{HT-torsor}, the $\fS_6$-action on 
    $\mathrm{Gr}(2,6)$ is stably linearizable, thus $X$ is $G$-unirational.

\section{Intersections of two quadrics}
\label{sect:2quad}

In this section, we prove:

\begin{theo}
\label{thm:2-2}
    Let $X$ be a smooth complete intersection of two quadrics in $\bP^5$ and $G\subseteq \Aut(X)$. The following are equivalent: 
    \begin{itemize}
     \item the $G$-action on $X$ is unirational,
        \item the $G$-action on $X$ satisfies condition {\bf (A)}, i.e., every abelian subgroup of $G$ fixes a point on $X$,
          \item the $G$-action fixes a point on $X$.
    \end{itemize}
\end{theo}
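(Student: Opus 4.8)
The plan is to prove the cycle of implications
\[
\text{fixed point on } X \;\Rightarrow\; \text{Condition \textbf{(A)}} \;\Rightarrow\; \text{$G$-unirational} \;\Rightarrow\; \text{fixed point on } X.
\]
The first implication is trivial: a $G$-fixed point is in particular fixed by every abelian subgroup. The last implication is also essentially formal given the results already in the excerpt: if $X$ is $G$-unirational then it satisfies Condition \textbf{(A)} (a necessary condition recalled in Section~\ref{sect:gen}); so it suffices to show that, for a smooth intersection of two quadrics $X\subset\bP^5$, Condition \textbf{(A)} forces an honest $G$-fixed point. This will merge the last two implications into proving the single statement: \emph{Condition \textbf{(A)} $\Rightarrow$ $X^G\neq\emptyset$, and in that case $X$ is $G$-unirational.}

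For the unirationality conclusion, once $X^G\neq\emptyset$ I would use the classical construction for intersections of two quadrics with a rational point: projection from a $G$-fixed point $\mathfrak p\in X$. Blowing up $\mathfrak p$, the intersection of the tangent spaces of the two quadrics at $\mathfrak p$ cuts out a line, and projection from $\mathfrak p$ realizes $\mathrm{Bl}_{\mathfrak p}X$ as (a blowup of) a conic bundle, or more directly exhibits a dominant map from a projective space of lines through $\mathfrak p$; this is $G$-equivariant because $\mathfrak p$ is fixed, so $G$ acts linearly on the tangent space $T_{\mathfrak p}\bP^5$ and on the exceptional $\bP^4$. Concretely, the lines through $\mathfrak p$ tangent to $X$ give a $G$-equivariant dominant rational map from $\bP(T_{\mathfrak p}X)\cong\bP^3$ (a $G$-representation) onto a quadric surface fibration over a $\bP^1$-like base, from which $G$-unirationality follows by the same no-name/fiberwise argument as in Propositions~\ref{prop:quad-genu} and \ref{prop:cubic-uni-2}. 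Alternatively one can quote Theorem~\ref{thm:bridge}: unirationality over $K$ of every twist ${}^TX$ follows because any twist of a smooth $(2,2)$-intersection with a $K$-point is $K$-unirational (indeed $K$-rational, by the classical projection argument), and a $G$-fixed point on $X$ guarantees a $K$-point on every twist.

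The heart of the proof — and the main obstacle — is showing that Condition \textbf{(A)} implies $X^G\neq\emptyset$. By Theorem~\ref{thm:bridge} / \cite[Theorem 10.2]{DR}-type reductions, it suffices to treat $G$ a $p$-group; and since the Amitsur-type obstruction and Bogomolov multiplier considerations of Section~\ref{sect:gen} are available, one can hope to reduce to small $p$-groups. The structural input is that a smooth intersection of two quadrics in $\bP^5$ can be simultaneously diagonalized, $\sum \lambda_i x_i^2 = \sum x_i^2 = 0$ with distinct $\lambda_i$, so that $\Aut(X)\cong(C_2)^5\rtimes \mathfrak S_6$ acting via sign changes and permutations of the six coordinates — this is where I expect to quote a normal-form lemma (as the paper does for cubics and quadrics elsewhere). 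Then a finite subgroup $G$ sits in this wreath-product-type group, and I would analyze the induced permutation action on the six coordinates together with the sign characters. For each abelian subgroup one reads off its fixed locus in $\bP^5$ as a union of coordinate subspaces, intersected with $X$; Condition \textbf{(A)} constrains which sign-change/permutation elements can lie in $G$. The combinatorial claim to establish is: if every abelian $H\subseteq G$ has a fixed point on $X$, then the whole of $G$ does — plausibly by exhibiting an invariant subset of the six coordinates on which $G$ acts "compatibly" enough to produce a fixed point, e.g. an orbit of size $\le 2$ giving a $G$-invariant line or point of $X$. I anticipate this case analysis — mirroring the case-by-case arguments in the proof of Theorem~\ref{thm:quad-uni} but now for subgroups of $(C_2)^5\rtimes\mathfrak S_6$ — to be the longest and most delicate part, possibly handled with \texttt{magma} by enumerating the finite subgroups of $\Aut(X)$, checking Condition \textbf{(A)} against $X^G$ directly, exactly as in the smooth and singular cubic sections.
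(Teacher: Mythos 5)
Your skeleton matches the paper's for the genuinely hard step: the paper also proves Condition {\bf (A)} $\Rightarrow X^G\neq\emptyset$ by putting $X$ into normal form and running a finite enumeration (it uses Avilov's classification \cite{avilov} of the induced action on the pencil $\lambda_1Q_1+\lambda_2Q_2$, giving six types of equations, then checks with {\tt magma} that every subgroup satisfying Condition {\bf (A)} already has a fixed point). Two corrections to your structural setup, though. First, $\Aut(X)$ is \emph{not} $(C_2)^5\rtimes\fS_6$ in general: it is $(C_2)^5\rtimes\Gamma$, where $\Gamma$ is only the subgroup of permutations of the six coordinates compatible with a projective automorphism of the pencil (equivalently, preserving the six points on $\bP^1$ where the pencil degenerates); for generic $\lambda_i$ one has $\Gamma=1$, and the possible nontrivial $\Gamma$ are exactly the six types the paper lists. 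Second, your proposed reduction ``it suffices to treat $G$ a $p$-group'' is unjustified for the statement Condition {\bf (A)} $\Rightarrow X^G\neq\emptyset$: fixed points for Sylow subgroups do not assemble into a $G$-fixed point, and the quadric-specific \cite[Theorem 10.2]{DR} reduction does not apply here. Neither issue is fatal, since your fallback (enumerate subgroups in the normal form and check directly) is exactly what the paper does.

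For the step ``$X^G\neq\emptyset\Rightarrow$ $G$-unirational'' you diverge from the paper, and your primary geometric description is wrong: projection from a point of $X$ does not produce a conic bundle or a quadric surface fibration (that comes from projecting from an invariant \emph{line}); it maps $X$ birationally onto a cubic threefold in $\bP^4$ with $4\sA_1$ or $2\sA_3$ singularities, containing a distinguished plane. This is precisely the route the paper takes equivariantly: it first disposes of the case of a $G$-invariant line via \cite[Theorem 24]{HT-2quad}, then projects from the $G$-fixed point and invokes the singular-cubic results of Section~\ref{sect:sing} (the invariant-plane case of Proposition~\ref{prop:cubic-uni-2}-type tangent-bundle arguments, which require a {\tt magma} verification that a general point of the plane is not a star point). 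Your alternative route through Theorem~\ref{thm:bridge} is viable -- a $G$-fixed $k$-point does give a $K$-point on every twist -- but it contains a genuine error: a smooth $(2,2)$-intersection with a $K$-point is \emph{not} $K$-rational in general (rationality is governed by the existence of a $K$-line, by Hassett--Tschinkel and Benoist--Wittenberg), only $K$-unirational; and that unirationality statement is not a one-line classical fact but itself rests on the same projection-to-a-singular-cubic construction (smooth $K$-points on the plane, plus unirationality of singular cubic hypersurfaces that are not cones, cf.\ \cite{kollarcubic}). So either you prove that input over arbitrary $K$, or you argue equivariantly as the paper does; as written, the unirationality half of your proposal has a gap at exactly this point.
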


Consider $X=X_{2,2}=Q_1\cap Q_2\subset \bP^5$, a smooth complete intersection of two quadrics. 
The problem of classification of actions of finite groups $G$ on $X$ has been addressed in \cite{avilov}. There is an induced action of a quotient of $G$ on the pencil $\lambda_1Q_1+\lambda_2Q_2$, and we write down equations in each case:  

\begin{itemize}
    \item Type I, $\fS_4$: $\sum_{j=0}^4x_j^2 =x_0^2+\zeta_4x_1^2-x_2^2-\zeta_4x_3^2+x_5^2= 0$,
    \item Type II, $\fD_6$: $\sum_{j=0}^5x_j^2 =\sum_{j=0}^5\zeta_{6}^{j}x_j^2= 0 $,
      \item Type III, $\fS_3$: $\sum_{j=0}^5x_j^2 =a(x_0^2+\zeta_3x_1^2+\zeta_3^2x_2^2)+x_3^2+\zeta_3x_4^2+\zeta_3^2x_5^2=0$,
       \item Type IV, $C_5$: $\sum_{j=0}^5x_j^2 =\sum_{j=0}^4\zeta_5^{j}x_j^2=0$,
      \item Type V, $C_2^2$: $\sum_{j=0}^4x_j^2 =a(x_0^2-x_1^2)+x_2^2-x_3^2+x_5^2=0$,
      \item Type VI, $C_2$: $\sum_{j=0}^5x_j^2 =a(x_0^2-x_1^2)+b(x_2^2-x_3^2)+x_4^2-x_5^2=0$.
\end{itemize}

We proceed with a list maximal subgroups $G\subseteq \Aut(X)$ satisfying Condition {\bf (A)}.

\begin{itemize}
\item     
Type I:
$\fS_3$, $\fD_4, C_2\times C_8$, $C_2^2\rtimes C_4,C_2\times \fA_4$,
\item Type II: $C_6, \fS_3$,
$C_2\times C_4, \fD_4 $, 
$C_2^3,C_2^2\rtimes C_4,\fA_4\rtimes C_4$, 
\item Type III:
$\fS_3$,
   $\fD_4 $,
     $C_2^3 ,C_2\times \fA_4$,
\item Type IV:
 $C_{10}$,
$C_2^3$,
\item Type V:
$C_2\times C_4, \fD_4 $,  $C_2^3,C_2^2\rtimes C_4$,
\item Type VI:
$\fD_4 $, $C_2^3$.
\end{itemize}

With {\tt magma}, we find that every subgroup of $\Aut(X)$ satisfying Condition {\bf (A)} has a fixed point on $X$. We may assume that $X$ does not contain a $G$-invariant line, since otherwise, the $G$-action is 
linearizable, by \cite[Theorem 24]{HT-2quad}.  
Projecting from the $G$-fixed point, we find that $X$ is $G$-birational to a cubic threefold, with singularities of type
$$
4\mathsf A_1, \quad \text{or}\quad 2\mathsf A_3,
$$
containing a unique plane, which must be $G$-invariant. Using {\tt magma} again, we check a general point on the invariant plane is not a star point, in each case. By results of Section~\ref{sect:sing}, all such cubic threefolds are $G$-unirational.
This proves Theorem~\ref{thm:2-2}.

It would be interesting to classify actions of finite groups on singular intersections of two quadrics $X_{2,2}\subset \bP^5$ and solve the linearization problems for this class of actions. Indeed, these naturally appear in related linearization problems:

\begin{exam}
\label{exam:V5}    
Let $V_5$ be the unique smooth Fano threefold of index $2$ and degree $5$ (the quintic del Pezzo threefold), and 
$$
G\subset \mathrm{Aut}(V_5)\simeq\mathsf{PGL}_2(k)
$$
a finite subgroup, not isomorphic to $\fA_5$. Then the $G$-action is linearizable. Indeed, for cyclic or dihedral groups, linearization follows from the construction in \cite[Section~5.8]{C-Calabi}. If $G=\mathfrak{A}_4$, this follows from the proof of \cite[Lemma 7.8.2]{CS}. If $G=\mathfrak{S}_4$, we have the following $G$-equivariant commutative diagram:  
$$
\xymatrix{
&\tilde{V}_5\ar[ld]_{\alpha}\ar[rd]^{\beta}\ar@{-->}[rr]&&\tilde{X}_{2,2}\ar[ld]_{\gamma}\ar[rd]^{\delta}\\
V_5&&X_{2,2}&&\mathbb{P}^2}
$$
where
\begin{itemize} 
\item 
$\alpha$ is the blowup of the unique $G$-fixed point, 
\item $\beta$ is the contraction of the strict transform of $3$ lines in $V_5$ that pass through the $G$-fixed point,
\item $X_{2,2}\subset \mathbb{P}^5$ is a complete intersection of two quadrics that has three ordinary double points, 
\item $\gamma$ is a small resolution of $X_{2,2}$,
\item 
the dashed arrow is a composition of three Atiyah flops, and 
\item 
$\delta$ is a $\mathbb{P}^1$-bundle, with generically free $G$-action on the base $\mathbb{P}^2$. 
\end{itemize}
This commutative diagram appears in \cite[Theorem~3.2]{langer} and \cite[Theorem~3.6]{Jahnke}. Linearization of the $G$-action follows by applying the no-name lemma. 
\end{exam}

\bibliographystyle{plain}
\bibliography{bguni}

\end{document}